\newcommand{\mc}{\mathcal}
\newcommand{\ov}{\overline}
\newcommand{\ds}{\displaystyle}
\newcommand{\ba}{\begin{array}}\newcommand{\ea}{\end{array}}
\newcommand{\de}{\mathrm{d}}
\newcommand{\A}{\mathcal{A}}
\newcommand{\B}{\mathcal{B}}
\newcommand{\C}{\mathcal{C}}
\newcommand{\D}{\mathcal{D}}
\newcommand{\E}{\mathcal{E}}
\newcommand{\F}{\mathcal{F}}
\newcommand{\G}{\mathcal{G}}
\newcommand{\K}{\mathcal{K}}
\newcommand{\I}{\mathcal{I}}
\renewcommand{\O}{\mathcal{O}}
\newcommand{\Q}{\mathcal{Q}}
\newcommand{\R}{\mathcal{R}}
\renewcommand{\S}{\mathcal{S}}
\newcommand{\V}{\mathcal{V}}
\newcommand{\U}{\mathcal{U}}
\newcommand{\W}{\mathcal{W}}
\newcommand{\Z}{\mathcal{Z}}
\newcommand{\Zi}{\Z_{\text{i}}}
\newcommand{\Zo}{\Z_{\text{o}}}
\DeclareMathOperator*{\argmax}{argmax}
\newcommand{\Rb}{\R^{\bullet}}
\newcommand{\RR}{\mathbb{R}}
\newcommand{\se}{\text{if}}
\newcommand{\sgn}[1]{\mathrm{sgn}\left(#1\right)}
\newcommand{\until}[1]{\{1,\dots,#1\}}
\newcommand{\trho}{\tilde{\rho}}
\newcommand{\fin}{f^{\mathrm{in}}}
\newcommand{\fout}{f^{\mathrm{out}}}
\newcommand{\rhomax}{B}
\newcommand{\fmax}{C}
\newcommand{\be}{\begin{equation}}
\newcommand{\ee}{\end{equation}}
\newcommand{\onebf}{\mathbf{1}}
\newcommand{\zerobf}{\mathbf{0}}
\newcommand{\newmaterial}[1]{{#1}}
\newcommand{\comparison}[1]{}
\newcommand{\setdef}[2]{\{#1 \; | \; #2\}}
\newcommand{\defineas}{:=}
\newtheorem{theorem}{Theorem}
\newtheorem{proposition}{Proposition}
\newtheorem{definition}{Definition}
\newtheorem{assumption}{Assumption}
\newtheorem{lemma}{Lemma}
\newtheorem{remark}{Remark}
\newtheorem{example}{Example}
\title{\LARGE \bf
Throughput optimality and overload behavior \\ of dynamical flow networks under monotone distributed routing}
\author{Giacomo Como, Enrico Lovisari, and  Ketan Savla
\thanks{ G.~Como and E.~Lovisari are with the Department of Automatic Control, Lund University, SE-221 00 Lund, Sweden {\tt\small {giacomo.como, enrico.lovisari}@control.lth.se}. K.~Savla is with the Sonny Astani Department of Civil and Environmental Engineering, University of Southern California, Los Angeles, CA 90089-2531 {\tt\small ksavla@usc.edu}. The first two authors are members of the excellence centers LCCC and ELLIT and were partially supported
by the Swedish Research Council through the Junior Research Grant Information Dynamics in Large Scale Networks.}%
}
\begin{document}

\maketitle
\begin{abstract}
\newmaterial{
The paper investigates the throughput behavior of single-commodity dynamical flow networks governed by monotone distributed routing policies. The networks are modeled as systems of ODEs based on mass conversation laws on directed graphs with limited flow capacities on the links and constant external inflows at certain origin nodes. Under monotonicity assumptions on the routing policies, it is proven that a globally asymptotically stable equilibrium exists so that the network achieves maximal throughput, provided that no cut capacity constraint is violated by the external inflows. On the contrary, should such a constraint be violated, the network overload behavior is characterized. In particular, it is established that there exists a cut with respect to which the flow densities on every link grow linearly over time (resp. reach their respective limits simultaneously) in the case where the buffer capacities are infinite (resp. finite). The results employ an $l_1$-contraction principle for monotone dynamical systems.}
\end{abstract}



\section{Introduction}
\label{section:Introduction}

Rapid technological advancements are facilitating real-time control of infrastructure networks, such as transportation, in order to achieve the efficient utilization of these networks. While static network flows, e.g., see \cite{Ahuja.Magnanti.ea:93}, have traditionally dominated the modeling framework for infrastructure networks, the true potential of the emerging technologies can only be realized by developing control design within a dynamical framework.

In this paper, we study single-commodity dynamical flow networks, modeled as systems of ordinary differential equations derived from mass conservation laws on directed graphs having constant external inflow at each of possibly multiple origins. The state of the system is the density of particles on the links of the network, limited by possibly finite buffer capacities. The flow of particles from a link to downstream links, limited by the maximum flow capacity, is regulated by deterministic rules, or routing policies, which depend on the state of the network. Particles leave the network when they hit at any of the possibly multiple destination nodes. We focus on routing policies that are distributed: the routing at each link only depends on local information consisting of density of itself and the links downstream to it. More specifically, we propose a novel class of \emph{monotone distributed routing policies} that are characterized by general monotonicity assumptions on the sensitivity of their action with respect to local information. 

Our objective is to prove maximum throughput and to characterize the overload behavior in networks operating under monotone distributed routing policies. Our main result is in the form of a dichotomy. If the external inflow at the origin nodes does not violate any cut capacity constraints, then there exists a globally asymptotically stable equilibrium, and the network achieves maximal throughput. When the external inflow at the origin nodes violates some cut capacity constraint, then the network exhibits the following feature: under infinite buffer capacities, there exists a constraint-violating cut, independent of the initial condition, such that the particle densities on the origin side of the cut grow linearly in time with the least possible slope; under finite buffer capacities, there exists a constraint-violating cut, in general dependent on the initial condition, such that all the links constituting the cut hit their buffer capacities simultaneously. The network is thus operated in the most efficient way, from a throughput perspective, even if the routing policies rely only on local information.

These results rely on the ability of the routing policy to implicitly \emph{back-propagate} congestion effects, allowing flow to be routed towards less congested parts of the network in a timely fashion. While algorithms for distributed computation of maximum network flow are well known (e.g., see \cite{Goldberg.Tarjan:88}) the novelty of our contribution consists in proving throughput optimality for flow dynamics naturally arising in physical networks. The proofs are based on an $l_1$-contraction principle for monotone conservation laws (Lemma~\ref{lemma:l1Contraction}), and on a complete characterization of all possible combinations of limiting (as densities approach the buffer capacities) states of all the links around every node (Lemma~\ref{lemma:propertiesWBCF}). \newmaterial{The former, in particular, is analogous to properties of some hyperbolic partial differential equations: e.g., cf.~Kru\v{z}kov's Theorem \cite[Proposition 2.3.6]{Serre:99} for entropy solutions of scalar conservation laws.}

\newmaterial{
The distributed routing architecture of this paper and the ensuing result on throughput optimality is reminiscent of the back-pressure routing algorithm for multi hop networks~\cite{TassiulasTAC93}. In \cite{TassiulasTAC93}, dynamics is imposed on the nodes of the network, instead of the links as here. However, one can transform our setup to fit within the one of \cite{TassiulasTAC93} by employing a \emph{dual graph} where the roles of nodes and links are exchanged in a suitable manner. The back-pressure routing setup allows for arbitrary constraints on simultaneous activation of links in the network. For specific constraints under which at most one, among all outgoing links, at every node can be activated, then the back-pressure routing, with the max operation replaced with softmax, can be argued to satisfy the properties of monotone distributed routing of this paper. Such an argument also extends to generalizations of back-pressure policies, such as the the MaxWeight-$f$ policies, for strictly increasing $f$. 

The dynamical formulation of this paper is also reminiscent of dynamic traffic flow over networks, e.g., see \cite{Daganzo:95,
Garavello.Piccoli:06}. In particular, our framework can be used to analyze dynamical traffic models that are related to the well-known \emph{cell transmission model} (CTM)~\cite{Daganzo:94,Daganzo:95}. The CTM can be explained for a line network as follows: a line is partitioned in \emph{cells} $e = 1, \dots, N$, in each of which the traffic state is described by traffic density. The system is driven by mass-conservation and the flow from a cell to the following is given by the minimum of two quantities: demand of cell $e$, describing the amount of vehicles that desire to enter into cell $e+1$, and supply of cell $e+1$, describing the maximum amount of vehicles that are allowed into it. 
Such a dynamical setup can be shown to satisfy the monotonicity properties of this paper, and hence one can derive tight conditions on the existence and stability of equilibria in such settings~\cite{LovisariCDC14}. As such, this result is a continuous time counterpart of \cite{GomesTRC08}. The CTM setup is extended to the general network case by specifying fixed \emph{turning ratios} and by imposing FIFO (first-in-first-out) constraints at diverging junctions~\cite{Daganzo:95}. In this case, the resulting setup does not necessarily satisfy the monotonicity properties of this paper. However, by relaxing the FIFO constraints, one recovers the monotonicity properties, and the results of this paper can then be utilized for analysis of such a model~\cite{LovisariCDC14}.
}

It is imperative to highlight the difference between this paper and our previous work~\cite{ComoPartITAC13, ComoPartIITAC13}, where we studied dynamical flow networks in which the action of the routing policy at a node is restricted to splitting the (given) inflow from incoming links at that node among the links outgoing from that node, as a function of the density on outgoing links. Specifically, such a routing architecture did not allow backward propagation of congestion effect. We proposed and studied a class of \emph{locally responsive policies} under such an architecture, for the infinite buffer capacity case and for directed acyclic network topologies. In this paper, we extend and modify such a framework, not only by allowing finite buffer capacities and cyclic network topologies, but more importantly by allowing the routing policies to completely control the flow transfer between links. Under this framework, we are able to provide explicit conditions for global asymptotic stability of equilibria and, unlike \cite{ComoPartITAC13, ComoPartIITAC13}, we give a detailed characterization of the overload behavior of the network.

The paper is organized as follows: Sec.~\ref{sec:motivatingexample} provides a motivating example for the study of monotone distributed policies. In Sec.~\ref{section:dynamicalFlowNetworks}, we propose a general model for dynamical flow in networks. In Sec.~\ref{section:MainResults} we state our main results, which are proven in Sec.~\ref{section:proof}. Finally, Sec.~\ref{section:conclusions} states conclusions and possible directions for future research.

We conclude by introducing some notational conventions. For finite sets $\mc A$ and $\mc B$, $\RR^{\mc A}$ ($\RR_+^{\mc A}$) is the space of real-valued (nonnegative-real-valued) vectors whose entries are indexed by elements of $\mc A$ and $\RR^{\mc A\times\mc B}$ the space of matrices whose real entries are indexed by pairs in $\mc A\times\mc B$. $M' \in\RR^{\mc B\times\mc A}$ is the transpose of $M \in \RR^{\mc A \times\mc B}$. Inequalities such as $x\le y$ or $x< y$ for vectors $x,y\in\RR^{\mc A}$ are meant to hold component-wise. 
We identify a network with a weighted directed multi-graph $\G = (\V, \E, C)$, where $\V$ and $\E$ stand for the finite sets of nodes and links, respectively, and $C\in(0,+\infty]^{\E}$ are link capacities. For link $e\in\mc E$, $\sigma_e$ and $\tau_e$ denote its tail and head nodes, respectively, so $e=(\sigma_e,\tau_e)$. While we assume there are no self-loops, i.e., $\tau_e\ne\sigma_e$ for $e \in \E$, we allow for parallel links.

\newmaterial{
\section{A motivating example}
\label{sec:motivatingexample}

\begin{figure}[t]
	\centering
		\includegraphics[width=9cm]{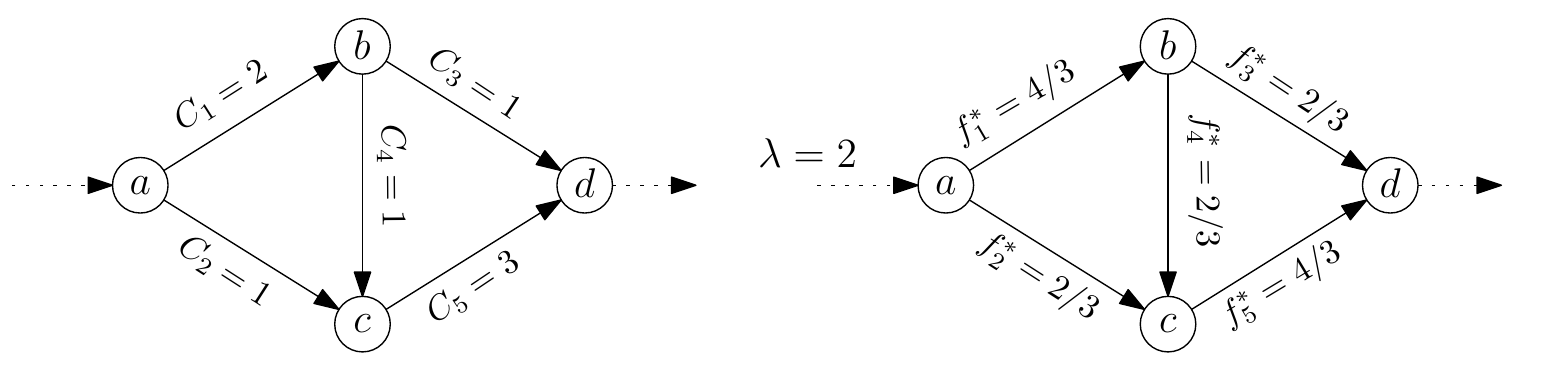}
	\caption{On the left, the network $\mc G$ analyzed in Sec.~\ref{sec:motivatingexample}, with node set $\mc V=\{a,b,c,d\}$, link set $\E=\{1,2,3,4,5\}$, and capacities $C_1 =2$, $C_2=C_3=C_4=1$, $C_5 = 3$. On the right, an equilibrium flow $f^*$.}
	\label{fig:throughputOptimalMotivatingExample}
\end{figure}

Consider the network $\mc G=(\mc V,\mc E,C)$ in Figure \ref{fig:throughputOptimalMotivatingExample}, with inflow $\lambda=2$ from node $a$ and equilibrium flow $f^*$. Our goal is to study throughput and resilience of dynamical flows on $\mc G$. As it turns out, these properties do not depend merely on $\mc G$ and $f^*$, but also on the specific flow dynamics. We focus on first-order dynamics of the form
	\be
		\label{dynsysexample}
			\dot\rho=A(F'(\rho)-F(\rho))\onebf\,,
	\ee
where $\rho=\rho(t)\in\RR_+^{5}$ is the vector of densities on the different links; $F(\rho)\in\RR^{6\times 6}$ is the matrix of link-to-link flows with $F_{ij}(\rho)$ denoting the flow from link $i$ to link $j$ and with the last row and column of $F(\rho)$ corresponding to inflows from and, respectively, outflows to the external world; $A = [I_{5 \times 5} \, \zerobf_{5 \times 1}]$ is the projection matrix on the first $5$ components; and $\onebf\in\RR^{5}$ is the all-one vector. Assume that all the links have infinite buffer capacities, i.e., the range of $\rho_e(t)$ is $[0,+\infty]$ for all $e \in \mc E$. 
To reflect the structure of $\mc G$ and invariance of the nonnegative orthant $\RR_+^5$ for solutions of (\ref{dynsysexample}), assume that  
	$$
			F(\rho)=M(\rho)R(\rho)\,,
	$$
where $M(\rho)=\text{diag}(C_1\varphi(\rho_1),\ldots,C_5\varphi(\rho_5),\lambda)$, with $\varphi(\rho)$ Lipschitz continuous and strictly increasing from $\varphi(0)=0$ to $\lim_{\rho\to\infty}\varphi(\rho)=1$, and $R(\rho)\in\RR^{6\times6}$ is a row-substochastic routing matrix with $R_{ij}(\rho)\equiv0$ whenever $\tau_i\ne\sigma_j$ (with the convention $\sigma_6=d$, $\tau_6=a$). The term $C_i\varphi(\rho_i)$ represents the density-dependent maximal outflow from a link $i$, while $R_{ij}(\rho)$ stands for the fraction of such maximal outflow routed to the downstream link $j$. E.g., dynamics on link $1$ reads $\dot{\rho}_1 = \lambda R_{61}(\rho) - \fmax_1 \varphi(\rho_1)(R_{13}(\rho)+R_{14}(\rho))$. Let 
	$$
		\mu(C,R):=\liminf_{t\to+\infty}\frac1t\int_0^t \left(F_{36}(\rho(s))+F_{56}(\rho(s)) \right) \de s
	$$
denote the throughput of flow dynamics \eqref{dynsysexample}, i.e., the long-term average inflow at the destination node $d$. A perturbation of \eqref{dynsysexample} is a dynamical system with the same network topology and routing matrix, but a potentially different vector of link capacities, $\tilde{\fmax}$. In the following, we shall be interested in measuring how much can such perturbations reduce the throughput of the system. Assume that the unperturbed dynamics \eqref{dynsysexample} admits an equilibrium $\rho^*\in\RR^5$ with
	$$
		AM(\rho^*)R(\rho^*)\onebf=AR'(\rho^*)M(\rho^*)\onebf=f^*\,,
	$$ 
so that in particular $\mu(C,R)=\lambda=2$, and define the \emph{resilience function} $\nu(\,\cdot\,)$ by letting, 
for $\delta\ge0$,
	$$
	\nu(\delta):=\inf_{\zerobf \leq \tilde C \leq C}\Big\{\sum_{1\le i\le 5}(C_i-\tilde C_i):\,\mu(\tilde C,R)< \lambda-\delta\Big\}\,.
	$$

Resilience function has a natural interpretation as the effort required by an adversary, who is choosing $\tilde{C} \leq C$, to cause a throughput loss $\delta$, given that the routing policy is $R(\rho)$.
Note that,  
if $C_{\mc G}=3$  stands for the min-cut capacity of $\mc G$, then necessarily $\nu(\delta)\le C_{\mc G}-\lambda+\delta=1+\delta$ for $0\le\delta\le\lambda$. Indeed, reducing the capacities of the links of a minimal cut in such a way that the perturbed min-cut capacity $\tilde C_{\mc G}$ does not exceed $\lambda$, then the throughput drops from $\lambda$ to at most $\tilde C_{\mc G}$. 


We now characterize the resilience function of three different routing matrices. Let us start with a fixed routing matrix
	$$
		R^{(1)}(\rho)
			\equiv
				\left[
					\ba{cccccc}
						0		&	0		&	1/2	&	1/2	&	0	&	0	\\
						0		&	0		&	0		&	0		&	1	&	0	\\
						0		&	0		&	0		&	0		&	1	&	0	\\
						0		&	0		&	0		&	0		&	0	&	1	\\
						0		&	0		&	0		&	0		&	0	&	1	\\
						2/3	  &	 1/3	&	0		&	0		&	0	&	0
					\ea
				\right]\,.
	$$
Let the network be at equilibrium $f^*$ before the perturbation. Under fixed routing $R^{(1)}$, a capacity reduction on link $3$ does not change its inflow $f_3^*=2/3$. If $f_3^*\ge\tilde C_3$ the density $\rho_3(t)$ cannot but grow unbounded and a throughput loss of $f_3^*-\tilde C_3$ occurs. Thus, the resilience function satisfies $\nu^{(1)}(\delta)\le C_3-f_3^*+\delta=1/3+\delta$ for $0\le\delta<2/3$. Additionally reducing capacity on link $4$ shows that $\nu^{(1)}(\delta)\le C_3+C_4-f_3^*-f_4^*+\delta=2/3+\delta$ for $2/3\le\delta<4/3$, and similarly up to $\delta = 2$. In fact, these bounds can be shown to be tight and the resilience function to be the one plotted in grey in Figure \ref{fig:throughputOptimalNuExample}.

\begin{figure}[t]
	\centering
		\includegraphics[width=5.5cm,height=4cm]{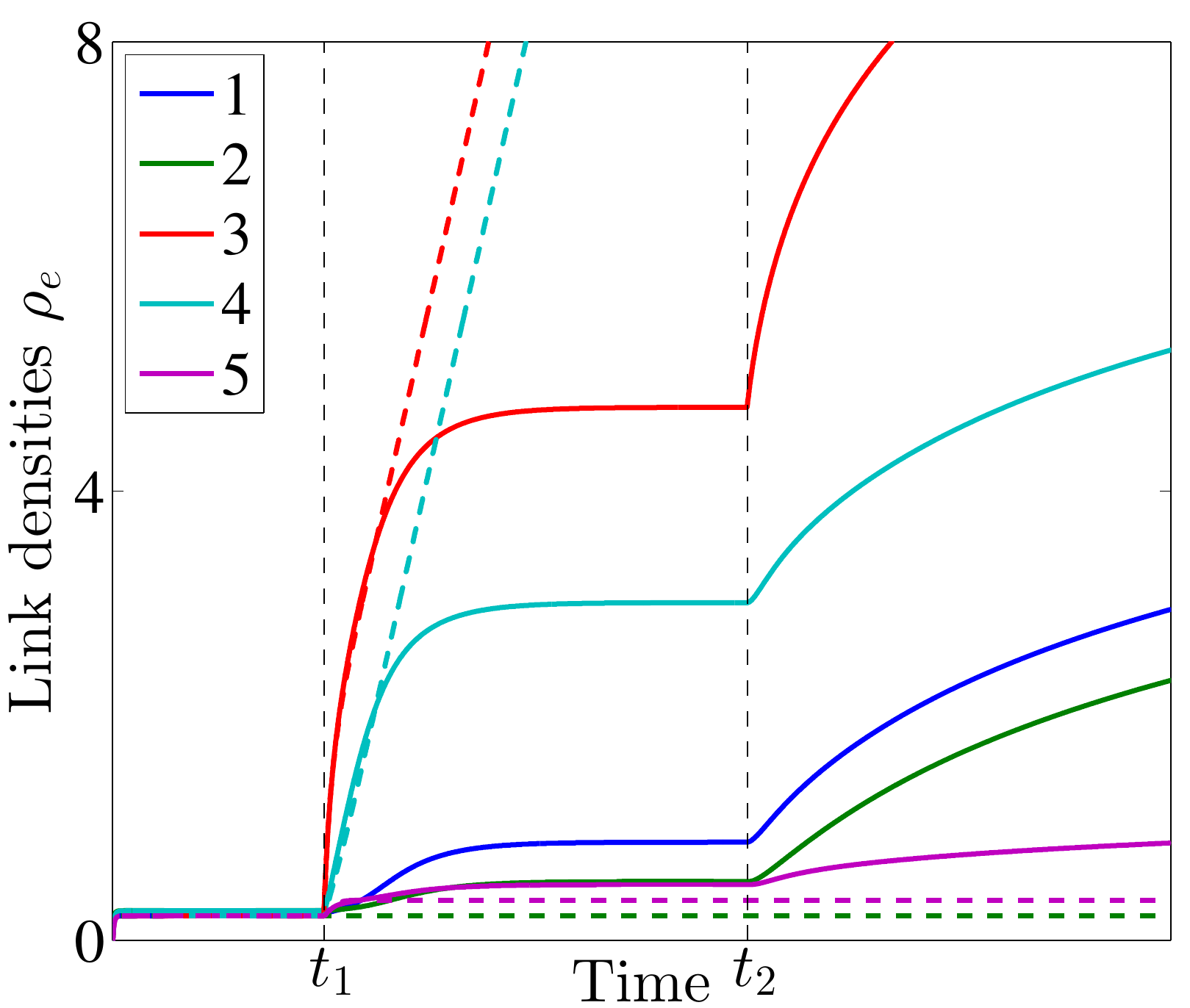}
	\caption{Trajectories of the solutions of \eqref{dynsysexample} under routing matrix $R^{(2)}$ (dashed lines) and $R^{(3)}$ (solid lines), with initial condition $\rho(0) = \zerobf$. At time $t_1$, the capacity on link $3$ drops to $\tilde{C}_3 = 1/6$, and at time $t_2$ it drops to $\tilde{\fmax}_3 = 0$. Under $R^{(2)}$ and after $t_1$, $\tilde{\fmax}_3+\fmax_4 = 7/6$ is smaller than the total outflow from link $1$, so the densities on links $3$ and $4$ grow unbounded. The min-cut capacity $\fmax_2+\tilde{\fmax_3}+\fmax_4 =13/6$ remains instead strictly higher than $\lambda = 2$, so the monotone distributed policy steers the network to a new equilibrium. After $t_2$, the min-cut capacity drops to $2$, the constraint is thus violated, and the densities of links $1$, $2$, $3$ and $4$ grow unbounded.}
	\label{fig:throughputOptimalMotivatingExampleSimulation}
\end{figure}

Now, let $R^{(2)}(\rho)$ be a locally responsive routing matrix~\cite{ComoPartITAC13, ComoPartIITAC13} with all entries coinciding with those of $R^{(1)}(\rho)$ except
	\be
		\ba{rcl}
			\label{R2}
				R^{(2)}_{13}(\rho)	&	=	1-R^{(2)}_{14}(\rho)	=	&	\ds\frac{e^{-\rho_3}}{e^{-\rho_4}+e^{-\rho_3}}\,,\\[10pt]
				R^{(2)}_{61}(\rho)	&	=	1-R^{(2)}_{62}(\rho)	=	&	\ds\frac{2e^{-\rho_1}}{2e^{-\rho_1}+e^{-\rho_2}}\,.
			\ea
	\ee 
In this case, reducing the capacity of link $3$ only does not cause any throughput loss if $\tilde C_3>1/3$. In fact, even if link $3$ cannot handle its initial inflow $f_3^*$, the system is able to adapt by rerouting the flow out of node $b$ and exploit the unused capacity on link $4$, so that a new equilibrium is reached provided that $f^*_1=f^*_3+f^*_4<\tilde C_3+\tilde C_4$. 

However, this is no longer the case if $\tilde C_3\le1/3$, as then $f^*_1\ge\tilde C_3+\tilde C_4$ and both $\rho_3(t)$ and $\rho_4(t)$ necessarily grow unbounded in $t$, with a throughput loss of $f^*_1-\tilde C_3-\tilde C_4$. This shows that the resilience function satisfies $\nu^{(2)}(\delta)\le2/3+\delta$, for $0\le\delta<4/3$. In fact, results in \cite{ComoPartIITAC13} on \emph{diffusivity} of locally responsive routing, i.e., a subadditive property for aggregate outflow increases in subnetworks as a function of capacity reductions, can be used to show that this bound is tight, and that $\nu^{(2)}(\delta)$ has the graph plotted in Figure \ref{fig:throughputOptimalNuExample}. 

\begin{figure}[t]
	\centering
		\includegraphics[height=4cm]{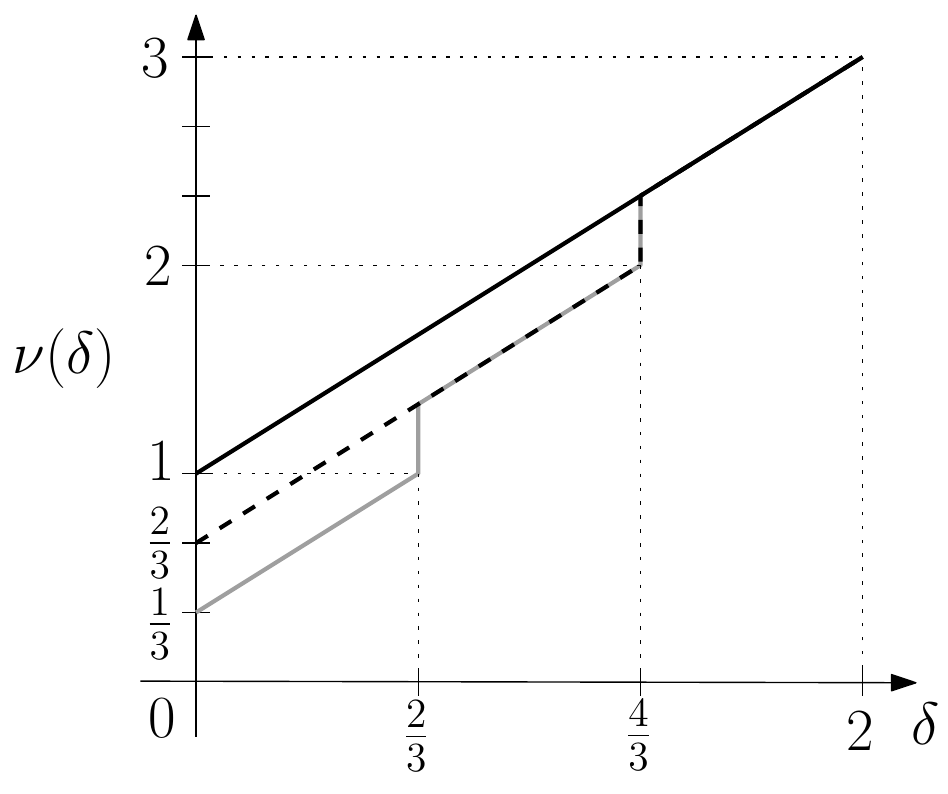}\\
	\caption{Resilience as a function of the throughput loss $\delta$ given the three routing matrices presented in Sec.~\ref{sec:motivatingexample}: the solid grey, dashed  black, and solid black lines	illustrate $\nu^{(1)}(\delta)$,  $\nu^{(2)}(\delta)$, and  $\nu^{(3)}(\delta)$, respectively. }
	\label{fig:throughputOptimalNuExample}
\end{figure}



Finally, consider a routing matrix $R^{(3)}(\rho)$ coinciding with $R^{(2)}(\rho)$ in all  but its $(1,3)$-th and $(1,4)$-th entries, given by 
	$$
		R_{13}^{(3)}(\rho) 
			= R_{13}^{(2)}(\rho)h(\rho)\,,\qquad 
		R_{14}^{(3)}(\rho)
			=	R_{14}^{(2)}(\rho)h(\rho)\,,
	$$ 
where 
	\be
		\label{hdef}
			h(\rho)=\frac{e^{-\rho_4}+e^{-\rho_3}}{e^{-\rho_1}+e^{-\rho_4}+e^{-\rho_3}}\in[0,1]
	\ee 
can be interpreted as a flow control term. Figure~\ref{fig:throughputOptimalMotivatingExampleSimulation} shows the trajectories of the link densities when a perturbation is applied at time $t_1$ such that $\tilde{\fmax}_3=1/6$. Observe that, although $f^*_1=4/3>7/6=\tilde C_3+\tilde C_4$, the link densities remain bounded in time and approach a new equilibrium. In fact, the mechanism allowing the network to absorb the perturbation can be understood rather intuitively: link $3$ is not capable to sustain its initial inflow $f^*_3$, nor links $3$ and $4$ are collectively able to sustain $f^*_1$, thus both $\rho_3(t)$ and $\rho_4(t)$ increase, thereby decreasing $h(\rho)$. This in turn forces the outflow from link $1$ to decrease and hence $\rho_1(t)$ to grow, namely, density increase is \emph{back-propagated} towards the origin, a mechanism that was completely absent in \cite{ComoPartIITAC13}. Then, the dynamic routing at node $a$ redirects more flow towards link $2$, such an increase being still small enough that densities on none of the links grow unbounded. In this way, the network is able to absorb the perturbation and reach a new equilibrium. Indeed, the main results of the present paper imply that in this case the resilience function $\nu^{(3)}(\delta)=1+\delta$ is the maximum possible. Specifically, Theorem \ref{theorem:mainResult} shows that, as long as the inflows do not violate any cut capacity constraints in the network, flow dynamics with the same properties as \eqref{dynsysexample} with routing $R^{(3)}(\rho)$ always admit a globally asymptotically stable equilibrium, while Proposition \ref{proposition:InfiniteBuffer} implies that if the min-cut capacity is smaller than the inflow in the network, either from the beginning or as the result of a perturbation, then the throughput is equal to the min-cut capacity itself, and is thus the maximum possible. 

Finally, it is possible to consider an analogous setting with finite buffer capacities $B_e$, where a link $e$ is irreversibly removed from the network the first time that $\rho_e(t)=B_e$, i.e., when link $e$ fails. While referring to Sec.~\ref{section:dynamicalFlowNetworks} and \cite{ComoCDC12} for a precise formulation of this setting with routing analogous to $R^{(3)}(\rho)$  and $R^{(2)}(\rho)$, respectively, we anticipate here that results paralleling the above-discussed infinite buffer capacity case can be established for the resilience function. Figure \ref{fig:perturbationExamplePolicy} reports the sequence of link failures for the three routing matrices when a perturbation affecting only link $3$ is applied such that $C_3-\tilde C_3=\nu^{(r)}(0)$, for $r = 1,2,3$. For the fixed routing matrix $R^{(1)}$, a perturbation in link $3$ such that $\tilde C_3=2/3$ makes link $3$ to fail first, thus forcing node $b$ to route all its outflow to link $4$ and making it fail, which in turn causes the failure of the upstream link $1$, thus forcing node $a$ to route all its outflow to link $2$ and making it fail. For the routing matrix $R^{(2)}(\rho)$, a perturbation in link $3$ such that $\tilde C_3=1/3$ first forces links $3$ and $4$ to fail simultaneously, then links $1$ and $2$ fail simultaneously since their inflow $\lambda=2$ is not smaller than $C_2$.  In contrast, for the routing matrix $R^{(3)}(\rho)$, a perturbation in link $3$ such that $\tilde C_3=0$ makes links $1$, $2$, $3$ and $4$ fail simultaneously. This is because the routing matrix $R^{(3)}(\rho)$ exploits the available capacity by redistributing the flow in the best way to avoid link failures as long as possible. Observe that the  failed links are those on the origin side of the bottleneck cut consisting of links $2$, $3$, and $4$, whose capacity upon perturbation equals the inflow $\lambda=2$. As we shall see in Proposition~\ref{corollary:FiniteBuffer}, this is a special case of a general result holding true for flow dynamics with the same properties as those of (\ref{dynsysexample}) with routing matrix $R^{(3)}(\rho)$.

\begin{figure}[t]
    \centering
    \subfigure[]
    {
        \includegraphics[height=1.65cm]{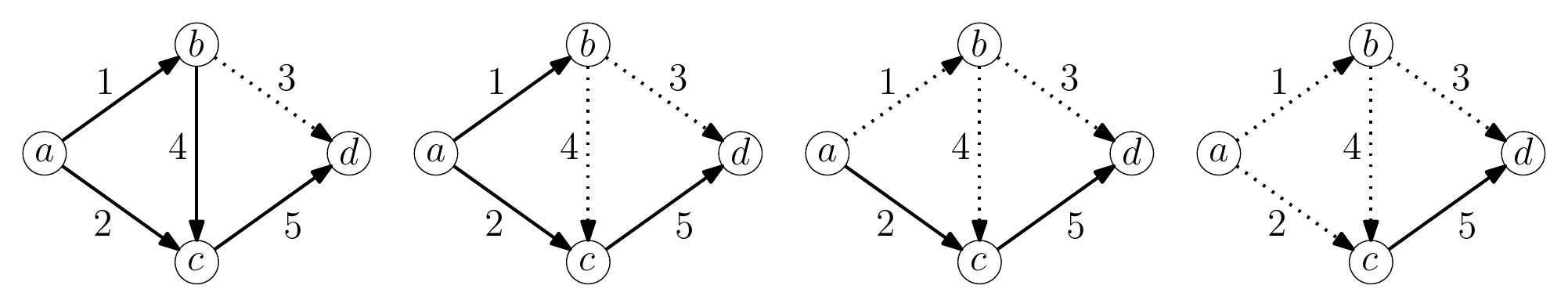}
        \label{fig:fixed}
    }
    \\
    \subfigure[]
    {
        \includegraphics[height=1.65cm]{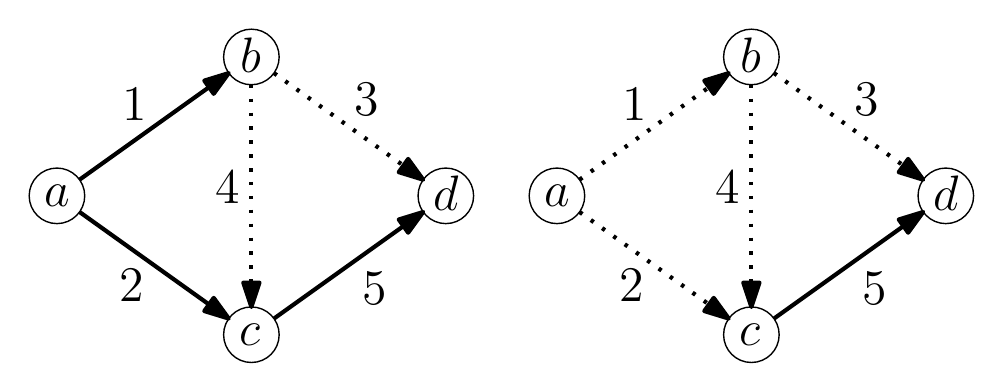}
        \label{fig:dynamically}
    }
    \subfigure[]
    {
        \includegraphics[height=1.65cm]{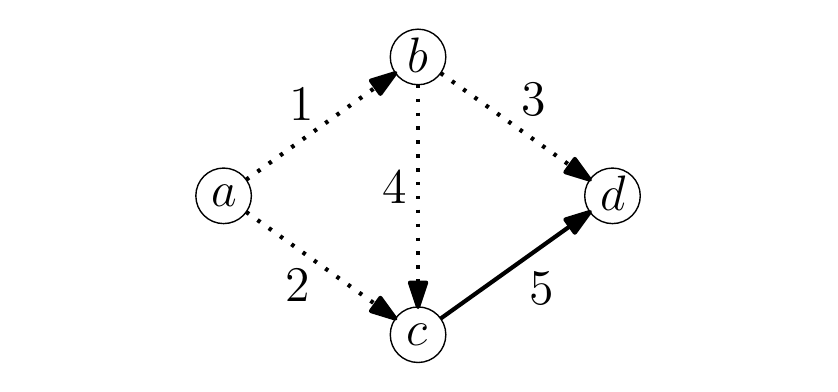}
        \label{fig:monotone}
    }
    \caption{Sequences of link failures under finite buffer capacities using routing matrices analogous to those in Sec.~\ref{sec:motivatingexample}. Link $3$ is perturbed only, with $C_3-\tilde C_3$ equal to $\nu^{(1)}(0)=1/3$, $\nu^{(2)}(0)=2/3$, and $\nu^{(3)}(0)=1$, respectively.
    }
    \label{fig:perturbationExamplePolicy}
\end{figure}
}

\section{Dynamical flow networks with monotone distributed routing}
\label{section:dynamicalFlowNetworks}

\begin{figure}[t]
	\centering
		\includegraphics[width=7cm]{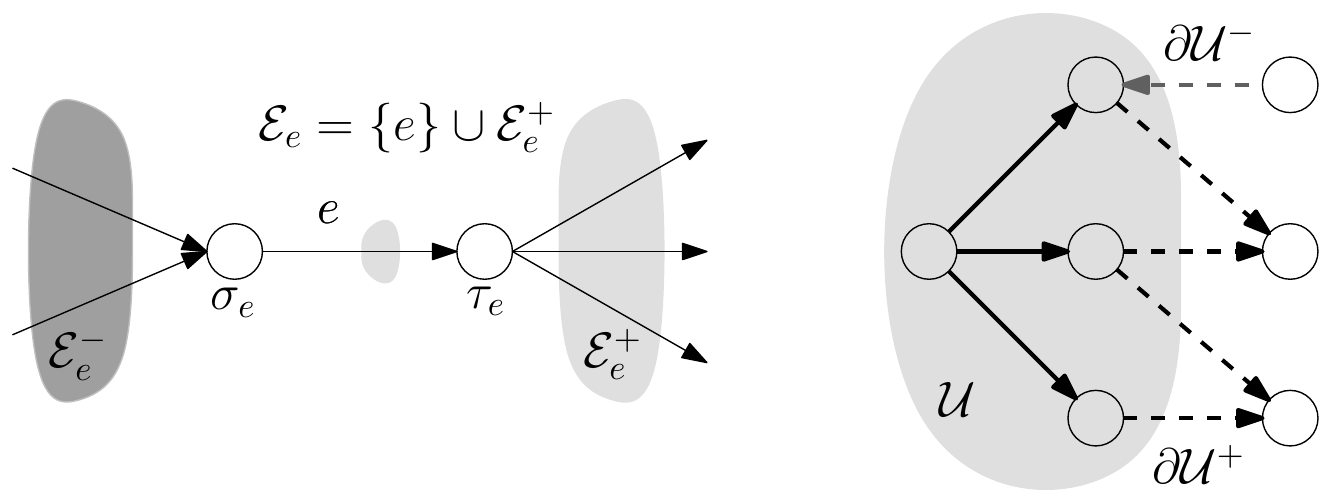}
	\caption{On the left, the dark grey and light grey areas encompass links in $\E_e^-$ and in $\E = \{e\}\cup\E_e^+$, respectively. On the right, the grey area encompasses the nodes of a cut $\U$. Links in $\partial_{\U}^-$ and $\partial_{\U}^+$ are shown in dashed grey and black arrows, respectively; links in $\mc E_\U^+ \setminus \partial_{\U}^+$ are shown in solid arrows.}
	\label{fig:throughputOptimalNotation}
\end{figure}

\newmaterial{
For a network  $\G = (\V, \E, \fmax)$, we introduce the following notation, illustrated in Figure~\ref{fig:throughputOptimalNotation}. Let $\E_v^+ \defineas \{e \in \mc E:\sigma_e = v\}$ and $\E_v^- \defineas \{e \in \mc E:\tau_e = v\}$ be the sets of incoming and, respectively, outgoing links of a node $v$. For a link $e$, let $\E_e^+ \defineas \E_{\tau_e}^+$ and $\E_e^- \defineas \E_{\sigma_e}^-$  be, respectively, the sets of its downstream and upstream links, and let 
	\be
		\label{Eedef}\E_e:=\E^+_e\cup\{e\}\,.
	\ee
For a subset $\mc U\subseteq\mc V$, define 
	$$
		\E_{\U}^+ \defineas \cup_{u\in\U}\E_u^+\,,
		\qquad 
		\E_{\mc U}^- \defineas \cup_{u\in \mc U}\E_u^-\,,
	$$ 
	$$
		\partial_{\U}^+ \defineas \{e: \sigma_e\in \U, \tau_e\notin \U\}\,,\ 
		\partial_{\U}^-:= \{e:\,\sigma_e\in\mc V\setminus\U,\,\tau_e\in \U\}\,.
	$$  

Let $\D \defineas \{v \in \mc V:\,\E_v^+=\emptyset\}$ be the set of \emph{destination nodes}. Consider a \emph{vector of inflows} $\lambda\in\RR_+^{\mc V\setminus\D}$ whose $v$-th entry $\lambda_v$ stands for the external inflow in node $v$, and let $\O \defineas \{v \in \mc V :\,\lambda_v>0\}$ be the set of \emph{origin} nodes. 
Let a cut be a non-empty subset of non-destination nodes $\U\subseteq\V\setminus\D$ and denote its capacity by $C_{\U} \defineas \sum_{e\in\partial_{\U}^+}\fmax_e$ and its aggregate external inflow by $\lambda_{\U}\defineas \sum_{v\in\U}\lambda_v$. 

It proves convenient to introduce the \emph{augmented} network $\G^a = (\V^a,\E^a,C^a)$ (see Figure~\ref{fig:throughputOptimalNetwork}) with node and link sets $\V^a = \V\cup\{w\}$, $\E^a = \E\cup\E_\O^-\cup\E_\D^+$, respectively, where $$\E_\O^- \defineas \{e_v \!:=\! (w, v): v\in\O\},\quad \E^+_\D \defineas \{e^d \!:=\! (d, w): d \in \D\},$$ and $C_{e_v} = C_{e^d} = +\infty$ for all $v \in \O$ and $d \in \D$. The extra node $w$ may be thought of as representing an external world, playing the double role of source of the flow entering in the network at the origins, and sink of the flow exiting from the destinations, respectively. From now on, we adopt the notation $\E_{e_v} = \E_{e_v}^+ := \E_v^+$, for all $v\in\O$,  and let $\E^-_e$ and $\E_v^-$ include links in $\E_\O^-$, and $\E^+_e$ and $\E^+_v$ include links in $\E_\D^+$, thus using these symbols consistently with the augmented graph $\mc G^a$.
Throughout, we shall make the following assumption.
\begin{assumption}
\label{assumption:connectivity} 
The set of destinations $\D$ is nonempty, and the augmented network $\G^a$ is strongly connected.
\end{assumption}
Assumption \ref{assumption:connectivity} is equivalent to the properties that, in $\G$, for every $v\in\V\setminus\D$, there exists a directed path from $v$ to some destination node $d\in\D$ and,  for every $u\in\V\setminus\O$, there exists a directed path from some origin node $o\in\O$ to $u$. Note that Assumption \ref{assumption:connectivity} implies that there is no subset $\A \subseteq \V$ that is unreachable in $\G^a$, i.e., such that $\partial_\A^- = \emptyset$ and $\lambda_\A = 0$.

\begin{remark}
Cut capacities determine potential bottlenecks for network flows. In particular, the max-flow min-cut theorem \cite{FordFulkersonCJM56,EliasIRETIT56} states that
    \begin{equation}
        \label{equation:MaxFlowMinCut}
        \max_{\lambda}\max_{\U}\left\{\lambda_{\U}-C_{\mc U}\right\}=0\,,
    \end{equation}
where the internal maximization runs over all cuts $\mc U$ and the external maximization runs over all external vectors of inflows $\lambda\in\RR_+^{\mc V\setminus\D}$ for which there exists some flow vector $f\in\RR_+^{\E\cup\E^-_{\O}}$ such that $f_{e_o}=\lambda_o$ for $o\in\mc O$, $\sum_{e\in\E^+_v}f_e-\sum_{e\in\E^-_v}f_e=0$ for $v\in\mc V\setminus\mc D$ and $f_e\le C_e$ for $e\in\mc E$.
In the special case of a single  origin $\O = \{o\}$, equation \eqref{equation:MaxFlowMinCut} reduces to $\max \lambda_o = \min_{\U}C_\U$, i.e., the maximum admissible inflow equals the min-cut capacity. Observe that this result is a purely static one as it only concerns potential equilibrium flows. 
\end{remark}

\begin{figure}[t]
	\centering
		\includegraphics[height = 4cm]{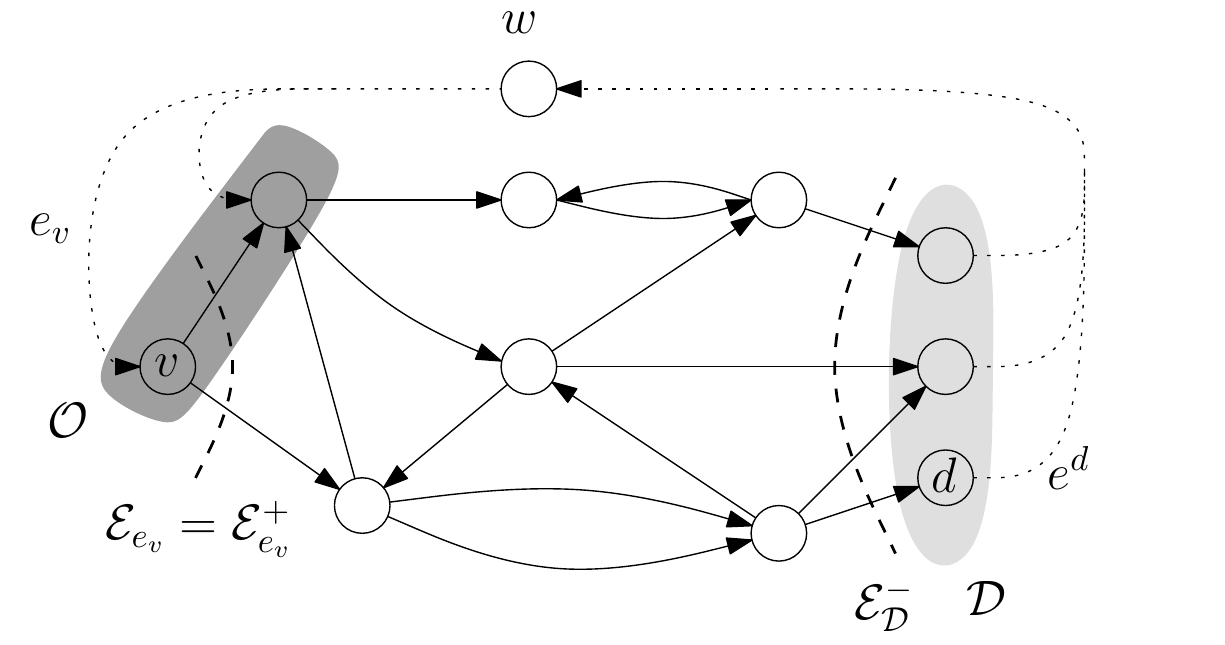}
	\caption{A network $\mc G$ and the augmented network $\mc G^a$. The links in $\E_\O^-$ and $\E_\D^+$, added in $\G^a$, are shown in dotted line. The set of origins and the set of destinations are shown in dark grey and light grey, respectively.}
	\label{fig:throughputOptimalNetwork}
\end{figure}

We now move on to introducing flow dynamics over $\G$. We consider autonomous dynamical systems of the form 
	\be
		\label{flowdynamics}
			\dot\rho=A^{\text{in}}F'(\rho)\onebf- A^{\text{out}}F(\rho)\onebf\,,
	\ee
where: $\rho(t)\in\RR^{\mc E}_+$ is a vector state whose $e$-th entry $\rho_e(t)$ represents the time-varying density on link $e$; $F(\rho)\in\RR_+^{(\E\cup\E_{\O}^-)\times (\E\cup\E_{\D}^+)}$ is the matrix of link-to-link flows with $F_{ij}(\rho)$ denoting the flow from link $i$ to link $j$; $A^{\text{in}}\in\RR^{\mc E\times(\E\cup\E_{\D}^-)}$ and $A^{\text{out}}\in\RR^{\mc E\times(\E\cup\E_{\O}^-)}$ are appropriate projection matrices from $\E\cup\E_{\D}^-$ and $\E\cup\E_{\O}^-$, respectively, onto $\mc E$; and $\onebf$ is the all-one vector (of the correct dimension). In order to match the topology and capacity constraints modeled by $\mc G$, the inflow $\lambda$, and invariance of the nonnegative orthant $\RR_+^{\E}$, it is assumed that: $F_{ij}(\rho)\equiv0$ if $\tau_i\ne\sigma_j$; 
$(F(\rho)\onebf)_{e_o}\equiv \lambda_o$ for all $o\in\mc O$;
and $F_{ij}(\rho)=0$ for all $j$ whenever $\rho_i=0$. 
We shall refer to \eqref{flowdynamics} as a \emph{dynamical flow network}.

To every link $e\in\mc E$ we associate a possibly finite buffer capacity $\rhomax_e\in(0,+\infty]$ and loosely use the phrase \emph{a set of links getting congested} to refer to the fact that the densities on those links approach their respective buffer capacities. 
We will focus on dynamics in $\R := \prod_{e\in\E}[0,B_e)$ and require $F(\rho)$ to be Lipschitz continuous on $\mc R$, so that standard analytical results imply, for every $\rho^{\circ}\in \mc R$, the existence and uniqueness of a solution $\{\rho(t):\,0<t<\kappa(\rho^{\circ})\}$ of \eqref{dynsysexample} starting from $\rho(0) = \rho^{\circ}$ which is well defined up to 
	$$    
		\kappa(\rho^{\circ}) :=   \sup\{t \ge 0:\, \rho(t)\in\R, \rho(0) = \rho^\circ\}	\,,
	$$
i.e., as long as $\rho(t)$ stays within $\mc R$. Note that, because of invariance of the nonnegative orthant, $\kappa(\rho^{\circ})$ coincides with the first time the solution of \eqref{flowdynamics} starting from $\rho(0)=\rho^{\circ}$ hits the buffer capacity on some link. 

We focus on flow dynamics that are distributed in the following sense: the flow from  $e\in\mc E\cup\E^-_{\O}$ to a downstream link $j\in\E^+_e\subseteq\E\cup\E_\D^+$ depends only on the local density vector 
	$$
		\rho^e := \{\rho_k:\,k\in\mc E_e\}\,,
	$$ 
where we recall that $\E_e=\E^+_e\cup\{e\}$ by \eqref{Eedef}. We will emphasize such functional dependence on local densities by writing the flow from $e\in\mc E\cup\E^-_{\O}$ to $j\in\E^+_e\subseteq\E\cup\E_\D^+$ as
	\be
		\label{distr}F_{ej}(\rho)=f_{e\to j}(\rho^e)\,,
	\ee
and referring to the family of flow functions $f=\{f_{e\to j}(\rho^e)\}$ as a \emph{distributed routing policy}. 
We will also use the notation 
	$$
		\fin_e(\rho) \defineas \sum_{j\in \E^-_e}f_{j\to e}(\rho^j)\,,\qquad 
		\fout_e(\rho^e):=\sum_{j \in \mc E_e^+} f_{e\to j}(\rho^e)
	$$  
for the total inflow and outflow, respectively, of a link $e \in \E$, so that \eqref{flowdynamics} reads
		\begin{equation}
        \label{equation:system}
            \dot{\rho}_e = \fin_e(\rho) - \fout_e(\rho)\,, \qquad e\in\E\,.
    \end{equation}  
Note that since $\sum_{e \in\E_v^-}\fout_e(\rho) = \sum_{e \in\E_v^+}\fin_e(\rho)$ for  $v \in \V$, \eqref{equation:system} and (\ref{flowdynamics}) imply mass conservation at the nodes.
}

Next, we formalize some fundamental properties of a class of distributed routing policy. 
As some of these characterize the behavior in the limit as some links get congested, we need to introduce the following notation: For $e\in(\E\cup\E_\O^-)\setminus\mc E^-_{\mc D}$, put $\ov\rho^e \defineas \{\rhomax_j:\,j\in \E_e\}$, and  let
	\be\label{Rbedef}
		\Rb_e \defineas 
			\begin{cases}
				\prod_{j\in\E_e}[0,B_j]\setminus\{\ov\rho^e\}, & \se \,\,e \in (\E\cup\E_\O^-)\setminus\mc E^-_{\mc D}	\\
				[0, B_e),&\se \,\,e\in\E_\D^-
			\end{cases}
	\ee
denote the set of possible densities on $e$ and links downstream to $e$ when not all of these links are congested (see Figure \ref{fig:throughputOptimalRe}). Finally, let the set of feasible outflows on the links downstream to $e$ under capacity constraint be defined as 
	$$
		\F_e \defineas  
			\begin{cases}
				\left\{x\in\RR_+^{\E^+_e}:\,\sum\nolimits_{j \in \mc E_e^+}x_j\leq \fmax_e \right\}, & \se \,\,e \in \E\cup\E_\O^-\setminus\mc E^-_{\mc D}	\\
				[0,C_e],&\se \,\,e\in\E_\D^-\,.
			\end{cases}
	$$
	
	\begin{figure}[t]
	\centering
		\includegraphics[height = 3cm]{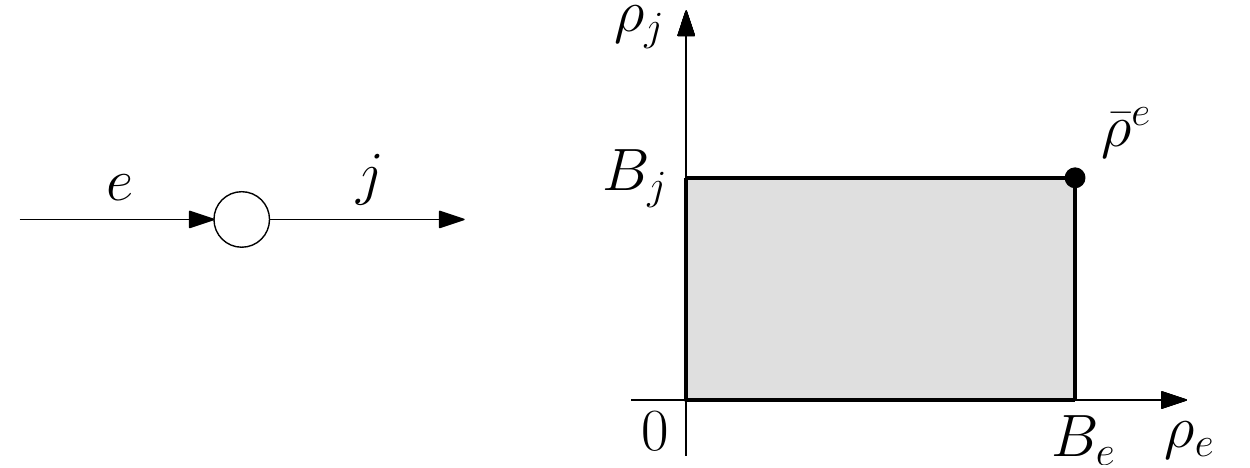}
	\caption{
The set $\Rb_e$ when $e\in\E$ has a unique downstream link $j\in\E$. It corresponds to the grey area with the solid line boundary, except the point $\bar{\rho}^e = \{\rhomax_e, \rhomax_j\}$ (represented as $\bullet$). 
	\label{fig:throughputOptimalRe}}
\end{figure}

\begin{definition}
\label{definition:distributedRoutingPolicy} 
Let $\mc G=(\mc V,\mc E,C)$ be a network satisfying Assumption \ref{assumption:connectivity} with vector of inflows $\lambda\in\RR_+^{\mc V\setminus\D}$ and buffer capacities $\{B_e\in(0,+\infty]:\,e\in\E\}$. A distributed routing policy $f$ is a family of Lipschitz-continuous maps 
    \begin{equation}
        \label{equation:definitionFlow}
        f^e: \Rb_e \to \F_e\,,\qquad     e \in \E\cup\E_\O^-\,,   
      \ee
such that
	$
		f^e(\rho^e) 
			= \left\{f_{e\to j}(\rho^e)\right\}_{j \in \E_e^+} 
	$
satisfy, at the origins
	\begin{equation}
  	\label{equation:definitionFlowOrigin}
    	\fout_{e_v}(\rho^{e_v}) \equiv \lambda_v, \qquad \forall v\in\O
	\end{equation}
and, for all $e \in \E$ and $\rho^e \in \Rb_e$,
	\begin{equation}
  	\label{equation:definitionFlowNoDensity}
    	\rho_e=0			\qquad\Longrightarrow\qquad \fout_e(\rho^e)=0\,,
	\end{equation}        
	\begin{equation}
		\label{equation:definitionFlowCongestedUpstream}
   		\rho_e = \rhomax_e \qquad\Longrightarrow\qquad \fout_e(\rho^e) = \fmax_e\,,
   \end{equation}
and, for all $e \in \left( \E\cup\E_\O^- \right) \setminus \mc E_{\mc D}^-$, $k \in \E_e^+$, $\rho^e \in \Rb_e$
	\begin{equation}
		\label{equation:definitionFlowCongestedDownstream}
    	\rho_k = \rhomax_k  \qquad\Longrightarrow\qquad f_{e\to k}(\rho^e) = 0\,.
   \end{equation}
\end{definition}

Observe that the domain of $f^e$ is $\Rb_e$, thus for $e\not\in\E_\D^-$ it is not defined at the point $\ov\rho^e = \{B_j:j\in\E_e\}$, where \eqref{equation:definitionFlowCongestedDownstream} and \eqref{equation:definitionFlowCongestedUpstream} cannot hold simultaneously. On the other hand, $f^e$ is well defined when at least one of the links around $e$ is not congested. Also, note that \eqref{equation:definitionFlowNoDensity} and \eqref{equation:definitionFlowCongestedDownstream} imply that $\fout_e(\rho^e) = 0$ if $\rho_e = 0$, i.e., there is no outflow from a link $e$ which is empty, or if $\rho_j = \rhomax_j$ for any $j \in \E_e^+$, i.e., if the densities on all the links outgoing from $\tau_{e}$ are at their buffer capacities.



We shall be interested in a special class of distributed routing policies, as per the following.
\begin{definition}
\label{definition:distributedRoutingPolicyMonotone}

A distributed routing policy $f$ is \emph{monotone} if, for all $e\in\E\cup\E_\O^-$, $\rho^{e}\in\Rb_{e}$, the functions $\{f^e\}$ satisfy
    \begin{align}
        &   \label{equation:monotone1}
        \frac{\partial f_{e \to j}}{\partial\rho_k}(\rho^e) \geq 0, &&
        \forall\, j\in\E_e^+, k \in \E_e\setminus\{j\}\,,\\
        &   \label{equation:monotone2}
        \frac{\partial}{\partial \rho_k}\fout_e(\rho^e) \leq 0, &&
        \forall\, k \in \E_e^+\,,
    \end{align}
for almost every $\rho^e\in\Rb_e$. A monotone distributed policy is \emph{strongly monotone} if, for all $e\in\E\cup\E_\O^-$, and almost every $\rho^{e}\in\Rb_{e}$, the inequalities in \eqref{equation:monotone1} and \eqref{equation:monotone2} are strict.
\end{definition}

Under monotone distributed routing policies, \eqref{dynsysexample} defines a \emph{cooperative} dynamical system (see \cite{HirschPS03}), since 
    \be
        \label{cooperative}
        \frac{\partial \fin_e}{\partial \rho_k}(\rho) \geq 0,\qquad \frac{\partial \fout_e}{\partial \rho_k}(\rho) \leq 0\qquad \forall e, k \in \E, e\neq k\,.
    \ee
Then, Kamke's theorem \cite[Th. 1.2]{HirschPS03} implies that \eqref{dynsysexample} is a monotone system \cite{HirschPS03}, i.e.,
    \be\label{eq:monotonicity}
        \rho(0) \leq \trho(0)\quad \Rightarrow\quad \rho(t) \leq \trho(t)\,,\quad \forall t\in[0,\kappa(\tilde{\rho}(0)))\,,
\ee
and thus clearly $\kappa(\rho^{\circ})\le\kappa(\zerobf)$ for all $\rho^{\circ}\in\mc R$. 

\newmaterial{
\begin{remark}
As shown in Lemma~\ref{corollary:contraction}, \eqref{dynsysexample} belongs to the class of compartmental systems, a class of monotone systems extensively used in the study of flow networks, such as transportation networks \cite{JacquezSIAMReview93}. It is also interesting to point out that, in the PDE literature, monotonicity is a property know to hold for entropy solutions of scalar conservation laws such as the traffic equation \cite[Proposition 2.3.6]{Serre:99}.  
\end{remark}

The monotonicity properties of the proposed policies describe both the behavior the particles in the network and the effect of flow control. In particular, \eqref{equation:monotone1} describes the fact that while particles might have preferred paths, they tend to deviate to avoid congested links, i.e., the higher $\rho_k$, the less the flow towards $k$. Instead, \eqref{equation:monotone2} requires that when density is increasing downstream of a link, the total flow from the link should not increase. We notice that this allows these policies to implicitly \emph{back-propagate}, towards the origins, the information that some branches of the network are getting congested. 
}

We conclude this section with an example of monotone distributed routing.

\begin{example}
\label{example:DistributedMonotonePolicy}
For every link $e\in\mc E$, let $\varphi_e:[0,\rhomax_e)\to[0,+\infty)$ be Lispchitz continuous, strictly increasing, and such that $\varphi_e(0)=0$ and $\lim_{\rho_e\uparrow B_e}\varphi_e(\rhomax_e)=+\infty$. E.g., for $\beta_e>0$, $\varphi_e(\rho_e)=\beta_e \rho_e /(B_e-\rho_e)$ if $\rhomax_e < +\infty$, or $\varphi_e(\rho_e)=\beta_e \rho_e$ if $\rhomax_e=+\infty$.
Define
	$$
		f_{e\to j}(\rho^e)
			=
				\begin{cases}
				 \ds \fmax_e \left(1-\gamma_e \right) \gamma_j/Z& \text{ if } e\in\E\setminus\E_\D^-\,,\\ 
				\ds \fmax_e \left(1-\gamma_e\right)& \text{ if } e \in\E_d^-, d\in\D,j=e^d\,, \vspace{0.02in}	\\			
				\ds\lambda_v \gamma_j/ Z& \text{ if } e = e_v, v \in\O, j \in \mc E_{e_v}\,,
				\end{cases}
	$$
where $\gamma_i:=\exp(-\varphi_i(\rho_i))$ and $Z:=\sum_{k\in\E_e} \gamma_k $. Observe that $f_{e\to j}(\rho^e)$ is defined for  
$\rho^e\in\R_e$ where 
	$
		\R_e:= \prod_{j\in\E_e}[0,\rhomax_j)
	$ if $e\notin\E$ and $\R_e=[0,B_e)$ for $e\in\E_{\D}^-$ 
and can be extended by continuity to $\Rb_e$ (as defined in (\ref{Rbedef})), but not to the point $\ov\rho^e$. 
Then, it can be readily verified that this defines a strongly monotone distributed routing policy.
\end{example}

\section{Main results}
\label{section:MainResults}
In this section, we present the main contributions of the paper. The first result is Theorem \ref{theorem:mainResult}, which states a dichotomy. If the inflow is less than the capacity of every cut, then there exists a globally asymptotically stable equilibrium density $\rho^* \in \R$.
Otherwise, the network is divided in two parts by a cut $\mc S$, such that the densities on the links in $\mc E_{\mc S}^+$ approach their buffer capacities simultaneously. 

\begin{theorem}
\label{theorem:mainResult} 
Let $\mc G=(\mc V,\mc E,C)$ be a network satisfying Assumption \ref{assumption:connectivity} with vector of inflows $\lambda \in \RR_+^{\mc V \setminus \mc D}$, and $f$ be a monotone distributed routing policy. For $\rho^\circ \in \R$, let $\{\rho(t):\,0\le t<\kappa(\rho^{\circ})\}$ be the solution of the dynamical flow network \eqref{dynsysexample} with initial condition $\rho(0) = \rho^\circ$. 
Then,
    \begin{enumerate}
        \item[(i)] if $\max_{\U} \left\{\lambda_{\U}-C_{\U} \right\}<0$, then $\kappa(\rho^\circ) =+\infty$ for every initial density $\rho^{\circ}\in\mc R$; moreover, if $f$ is strongly monotone, then there exists an equilibrium density $\rho^* \in \R$ such that $\lim_{t\to\infty}\rho(t) = \rho^*$ for every initial density vector $\rho^{\circ}\in\mc R$.
        \item[(ii)] if $\max_{\U}\left\{\lambda_{\U}-C_{\U}\right\}>0$, or if $\max_{\U}\left\{\lambda_{\U}-C_{\U}\right\}=0$ and if $f$ is strongly monotone, then, for every initial density $\rho^{\circ}\in\mc R$, there exists a cut $\S$ such that \be\label{maintheo:claim2}\lim_{t\to\kappa(\rho^\circ)}\rho_e(t) = \rhomax_e,\quad \forall e \in \E^+_{\S}\,. \ee   						
    \end{enumerate}
\end{theorem}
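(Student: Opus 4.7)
The proof combines three main ingredients: the monotonicity of the dynamics expressed in \eqref{eq:monotonicity}, the $l_1$-contraction principle of Lemma~\ref{lemma:l1Contraction}, and the characterization of limiting densities around congested nodes given by Lemma~\ref{lemma:propertiesWBCF}. The central reduction is to analyze the trajectory $\rho^{(0)}(t)$ starting from the empty initial condition $\rho^\circ=\zerobf$. By \eqref{equation:definitionFlowNoDensity}, $\fout(\zerobf)=\zerobf$, hence $\dot\rho^{(0)}(0)\ge\zerobf$, and applying \eqref{eq:monotonicity} between $\rho^{(0)}(0)$ and $\rho^{(0)}(\epsilon)\ge\rho^{(0)}(0)$ yields that $\rho^{(0)}(t)$ is componentwise non-decreasing in $t$. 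Thus either $\rho^{(0)}(t)$ converges to an equilibrium $\rho^*\in\mc R$, or some of its components approach their buffer capacities as $t\to\kappa(\zerobf)$.

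For part (i), under the strict cut condition $\lambda_\U<C_\U$ for every cut $\U$, I rule out the latter alternative by contradiction. Suppose $\rho^{(0)}_e(t)\to B_e$ for some link $e$. Lemma~\ref{lemma:propertiesWBCF} then allows me to define the \emph{maximal} set of nodes $\mc U^*\subseteq\mc V\setminus\mc D$ whose outgoing links all satisfy $\rho_e(t)\to B_e$ as $t\to\kappa(\zerobf)$. For every $e\in\partial^-_{\mc U^*}$, the head $\tau_e\in\mc U^*$ has all its outgoing links congested, so \eqref{equation:definitionFlowCongestedDownstream} forces $\fout_e\to 0$. For every $e\in\partial^+_{\mc U^*}$, the congestion of $e$ itself together with \eqref{equation:definitionFlowCongestedUpstream} yields $\fout_e\to C_e$, the maximality of $\mc U^*$ ensuring that not all of $\mc E^+_{\tau_e}$ is congested so that \eqref{equation:definitionFlowCongestedDownstream} does not drive $\fout_e$ to zero. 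Summing $\dot\rho_e$ over $e\in\mc E^+_{\mc U^*}$ and applying node-level mass balance gives
$$
    \frac{d}{dt}\sum_{e\in\mc E^+_{\mc U^*}}\rho_e(t)\to\lambda_{\mc U^*}-C_{\mc U^*}<0\,,
$$
contradicting the fact that this sum is non-decreasing along $\rho^{(0)}$. Hence $\rho^{(0)}(t)$ converges to an equilibrium $\rho^*\in\mc R$. To extend $\kappa(\rho^\circ)=+\infty$ and asymptotic stability to every $\rho^\circ\in\mc R$, I invoke Lemma~\ref{lemma:l1Contraction} to bound $\|\rho(t;\rho^\circ)-\rho^{(0)}(t)\|_1$ uniformly; under strong monotonicity the resulting strict contraction of $\|\rho(t;\rho^\circ)-\rho^*\|_1$ yields uniqueness of $\rho^*$ and global convergence.

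For part (ii), any equilibrium $\rho^*\in\mc R$ would induce a feasible static flow satisfying $\lambda_\U\le C_\U$ on every cut (by max-flow-min-cut duality). Under $\max_\U\{\lambda_\U-C_\U\}>0$ no such equilibrium can exist, so $\rho^{(0)}(t)$ cannot remain bounded and some link must reach its buffer capacity as $t\to\kappa(\zerobf)$. The same construction as in part (i) then identifies the set $\mc U^*$ of congested nodes, which provides the desired cut $\mc S$ with $\rho_e(t)\to B_e$ for all $e\in\mc E^+_{\mc S}$. For a general initial density $\rho^\circ\in\mc R$, the mass-balance argument applied directly to $\rho(\cdot;\rho^\circ)$ produces a (possibly $\rho^\circ$-dependent) cut $\mc S(\rho^\circ)$. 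The boundary case $\max_\U\{\lambda_\U-C_\U\}=0$ with strong monotonicity is handled analogously: strict $l_1$-contraction from Lemma~\ref{lemma:l1Contraction} rules out equilibria saturating the critical cut, and the mass-balance argument identifies the overloaded cut.

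The principal obstacle is the delicate application of Lemma~\ref{lemma:propertiesWBCF} to guarantee that $\mc U^*$ is non-empty, forms a cut disjoint from $\mc D$, and satisfies the vanishing-inflow/saturated-outflow conditions required for the mass balance in the limit. The precise combinatorial characterization of limiting configurations around each node provided by that lemma is exactly what makes this identification of the bottleneck cut rigorous and allows the mass-balance contradiction (resp.\ identification) to close the argument in part (i) (resp.\ part (ii)).
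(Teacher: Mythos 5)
Your overall strategy is the paper's: use monotonicity to get a non-decreasing, hence convergent, trajectory from $\rho^\circ=\zerobf$; use the combinatorics of Lemma~\ref{lemma:propertiesWBCF} plus a mass-balance limit to extract a cut $\S$ with $\lambda_\S\ge C_\S$ whenever some density reaches its buffer capacity (this is exactly Lemma~\ref{lemma:existenceOfCut} in the paper, which you essentially re-derive); and use the $l_1$-contraction together with LaSalle for convergence under strong monotonicity. However, there are genuine gaps in how you pass from the zero initial condition to arbitrary $\rho^\circ\in\R$. In part (i), you claim that bounding $\|\rho(t;\rho^\circ)-\rho^{(0)}(t)\|_1$ via the contraction principle yields $\kappa(\rho^\circ)=+\infty$. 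It does not: the contraction estimate is only valid for $t<\min\{\kappa(\rho^\circ),\kappa(\zerobf)\}$, and an $l_1$ bound on the total deviation (which is of size $\|\rho^\circ\|_1$, possibly large) cannot prevent a single component $\rho_e(t)$ from hitting a \emph{finite} buffer capacity $\rhomax_e$ in finite time. The paper closes this by a different argument: if $\kappa(\rho^\circ)<+\infty$, the limit $\lim_{t\uparrow\kappa(\rho^\circ)}\rho(t)$ exists and lies outside $\R$, so the cut-existence argument (Lemma~\ref{lemma:existenceOfCut}) applied to \emph{this} trajectory produces a cut with $\lambda_\S\ge C_\S$, contradicting the strict cut condition; the contraction is then used only to guarantee boundedness (relevant for infinite buffers) before invoking LaSalle.

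A related gap appears in part (ii) for general initial densities: you say the mass-balance/cut construction can be "applied directly to $\rho(\cdot;\rho^\circ)$", but that construction (the sets $\B,\W,\C,\Z$ and Lemma~\ref{lemma:propertiesWBCF}) requires the existence of a limit density, which is not guaranteed when $\kappa(\rho^\circ)=+\infty$ (e.g.\ with infinite buffers in overload). The paper splits into two cases: if $\kappa(\rho^\circ)<+\infty$ the limit exists and Lemma~\ref{lemma:existenceOfCut} applies; if $\kappa(\rho^\circ)=+\infty$, monotonicity gives $\rho(t;\rho^\circ)\ge\phi^t(\zerobf)$, so the conclusion for the zero trajectory squeezes $\rho_e(t)\to\rhomax_e$ on $\E^+_\S$ for the same cut. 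Finally, your treatment of the boundary case $\max_\U\{\lambda_\U-C_\U\}=0$ is not the right mechanism: the $l_1$-contraction compares pairs of trajectories and says nothing about flow saturation at a putative equilibrium; what is needed (and what the paper uses) is that strong monotonicity forces $\sum_{e\in\partial_\U^+}\fout_e(\rho^*)<C_\U$ at any interior $\rho^*$, so that the cut mass balance $0\ge\lambda_\U-\sum_{e\in\partial_\U^+}\fout_e(\rho^*)$ is violated for a critical cut.
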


\newmaterial{
Theorem~\ref{theorem:mainResult}, together with $\max_{\U} \left\{\lambda_{\U}-C_{\U} \right\}<0$ being a necessary condition for the network to admit an equilibrium,  implies that monotone distributed policies are maximally stabilizing. In terms of resilience, Theorem~\ref{theorem:mainResult} reads $\nu(0) = \lambda_{\V\setminus\D} - \fmax_\G$, i.e., throughput loss only occurs if the capacity is reduced in such a way that the min-cut capacity constraint is violated.

\begin{remark}
This framework can easily be applied to scenarios where nodes have maximum outflow capacity $C_v$ and/or finite buffer capacity $B_v$ to \emph{store-and-forward} particles. In order to bring this setup within the purview of Theorem~\ref{theorem:mainResult}, one can replace every node $v \in \mc V \setminus \mc D$ with a pair of nodes $v_1$ and $v_2$, which inherit incoming and outgoing links, respectively, from node $v$, and are connected by a directed link $(v_1,v_2)$ with flow and buffer capacities equal to $C_v$ and $B_v$, respectively. On the other hand, for a destination node $d \in \mc D$, we assign the buffer and outflow capacities to the link $(d,w)$. 
From an implementation perspective, this construction allows one to interpret the routing at node $v$ as the combination of routing at $v_1$ and $v_2$ with the buffer of link $(v_1,v_2)$ serving as the internal state.  
\end{remark}

\begin{remark}
Theorem~\ref{theorem:mainResult} can be extended to time-varying inflows $\lambda(t)$. In particular, input-output monotonicity~\cite{AngeliTAC03} implies that the solutions of \eqref{dynsysexample} with time-varying inflows $\lambda_v(t)$ and constant inflows $\tilde{\lambda}_v := \sup_{t \geq 0} \lambda_v(t)$, $v \in \mc V \setminus \mc D$, respectively, satisfy $\rho(t) \leq \tilde{\rho}(t)$ when started from initial conditions $\rho(0)\leq \tilde\rho(0)$. Then, it follows from Theorem~\ref{theorem:mainResult} that, if $\max_{\U} \{\tilde\lambda_{\U}-C_{\U} \}<0$, then $\limsup\rho(t)\leq\lim\tilde{\rho}(t)=\tilde{\rho}^*$ as $t \to \infty$ under strongly monotone distributed routing. 

For the infinite buffer capacity case, a stronger result holds. Let $\hat{\lambda}_v:=\limsup \frac{1}{t} \int_0^{t} \lambda_v(t) \, dt$. It is then possible to show that  $\max_{\mc U}\{\hat{\lambda}_{\mc U} - C_{\mc U} \} < 0$ implies that every trajectory remains bounded in time under monotone distributed routing.
\end{remark}
}

\subsection{Overload behavior with finite buffer capacities}

The following proposition gives a more detailed characterization of what happens when the capacity constraints are violated in the case of finite buffer capacities.

\begin{proposition}
\label{corollary:FiniteBuffer}
Let $\mc G=(\mc V,\mc E,C)$ be a network satisfying Assumption \ref{assumption:connectivity} with vector of inflows $\lambda \in \RR_+^{\mc V \setminus \mc D}$ and finite buffer capacities $B_e\in(0,+\infty)$, $e\in\E$, and $f$ be a monotone distributed routing policy. Assume that $\max_{\mc U}\left\{\lambda_{\U}-C_{\U}\right\}>0\,.$
Then, for every $\rho^\circ \in \R$, 
\be\kappa(\rho^{\circ})\le\min_{\mc U:\,\lambda_{\U}>C_{\mc U}}\frac{\sum_{e\in\E_{\mc U}^+}\left(B_e-\rho^\circ_e\right)}{\lambda_{\U}-C_{\U}}\,,
\label{kappaUB}\ee
and there exists a cut $\mc S$, possibly depending on $\rho^{\circ}$, such that $\lambda_{\S}>C_{\S}$ and
\begin{equation}
\label{Sexplodes}
\begin{split}
& \rho_e(t)<B_e\,,\, \forall e\in\E\,,\ 0\le t<\kappa(\rho^{\circ})\,,\\ & \lim_{t\to\kappa(\rho^\circ)}\rho_e(t) = \rhomax_e,\quad \forall e \in \E_{\S}^+\,,\end{split}
\end{equation}
where $\{\rho(t):\,0\le t<\kappa(\rho^{\circ})\}$ is the solution of the dynamical flow network \eqref{dynsysexample} with initial condition $\rho(0)=\rho^\circ$.
\end{proposition}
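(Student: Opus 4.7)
The plan is threefold: (a) establish the upper bound (\ref{kappaUB}) by a mass-balance argument on cuts $\U$ for which $\lambda_\U>C_\U$; (b) invoke Theorem~\ref{theorem:mainResult}(ii) to produce a cut $\S$ along which all the downstream densities saturate at $\kappa(\rho^\circ)$; and (c) combine Lipschitz continuity of the routing policy (in the spirit of Lemma~\ref{lemma:propertiesWBCF}) with a Gr\"onwall-type estimate to force the strict inequality $\lambda_\S>C_\S$.

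For (a), I would sum the dynamics (\ref{equation:system}) over $e\in\E_\U^+$ to introduce $M_\U(t):=\sum_{e\in\E_\U^+}\rho_e(t)$ and, using (\ref{equation:definitionFlowOrigin}) together with telescoping at each internal node, derive the boundary identity
\[
\dot M_\U \;=\; \lambda_\U \;+\; \sum_{j\in\partial_\U^-}\fout_j(\rho^j) \;-\; \sum_{e\in\partial_\U^+}\fout_e(\rho^e)\,.
\]
Since $\fout_j\ge0$ and $\fout_e\le C_e$, this yields $\dot M_\U\ge\lambda_\U-C_\U$. For any $\U$ with $\lambda_\U>C_\U$, integrating from $0$ to $t<\kappa(\rho^\circ)$ and using $M_\U(t)<\sum_{e\in\E_\U^+}B_e$ gives $t<\sum_{e\in\E_\U^+}(B_e-\rho_e^\circ)/(\lambda_\U-C_\U)$; letting $t\uparrow\kappa(\rho^\circ)$ and minimizing over such $\U$ proves (\ref{kappaUB}).

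For (b), the bound just derived forces $\kappa(\rho^\circ)<+\infty$, so Theorem~\ref{theorem:mainResult}(ii) applies and yields a cut $\S$ with $\rho_e(t)\to B_e$ for every $e\in\E_\S^+$ as $t\to\kappa(\rho^\circ)$. To analyze the limit of $\dot M_\S$ at $\kappa(\rho^\circ)$, I would observe that for $e\in\partial_\S^+$ the downstream links $k\in\E_e^+$ have $\sigma_k\notin\S$, so $\rho_k$ stays bounded away from $B_k$; continuity of $f^e$ on $\Rb_e$ combined with (\ref{equation:definitionFlowCongestedUpstream}) then gives $\fout_e\to C_e$. For $j\in\partial_\S^-$ one has $\E_j^+\subseteq\E_\S^+$, hence all downstream densities approach their buffer capacities and (\ref{equation:definitionFlowCongestedDownstream}) together with continuity of $f^j$ on $\Rb_j$ forces $f_{j\to k}(\rho^j)\to 0$ for every $k$, whence $\fout_j\to 0$. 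In particular $\dot M_\S\to\lambda_\S-C_\S$.

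For (c), which is the delicate step, I would quantify these limits via the Lipschitz constant $L$ of $f$: $\fout_e(\rho^e)\ge C_e-L(B_e-\rho_e)$ for $e\in\partial_\S^+$, and $\fout_j(\rho^j)\le L\sum_{k\in\E_j^+}(B_k-\rho_k)$ for $j\in\partial_\S^-$. Setting $\Phi(t):=\sum_{e\in\E_\S^+}(B_e-\rho_e(t))$ and substituting these bounds into the expression for $\dot M_\S$ yields, in a one-sided neighborhood of $\kappa(\rho^\circ)$, an estimate of the form $\dot\Phi\ge(C_\S-\lambda_\S)-L'\Phi$ for some constant $L'$. If $\lambda_\S\le C_\S$, Gr\"onwall's inequality would give $\Phi(t)\ge\Phi(t_0)e^{-L'(t-t_0)}>0$ for all finite $t\ge t_0$, contradicting $\Phi(\kappa(\rho^\circ))=0$ at the finite time $\kappa(\rho^\circ)$. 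Hence $\lambda_\S>C_\S$. The principal obstacle is precisely this boundary case $\lambda_\S=C_\S$: plain mass balance only delivers $\lambda_\S\ge C_\S$, and ruling out equality requires the uniform limit-state analysis enabled by Lemma~\ref{lemma:propertiesWBCF} together with the Lipschitz structure of monotone distributed policies.
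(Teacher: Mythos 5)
Your steps (a) and (b) coincide with the paper's own proof: the paper establishes \eqref{kappaUB} by exactly your mass-balance identity $\sum_{e\in\E_\U^+}\dot\rho_e=\lambda_\U+\sum_{j\in\partial_\U^-}\fout_j-\sum_{e\in\partial_\U^+}\fout_e\ge\lambda_\U-C_\U$ followed by integration, and it obtains \eqref{Sexplodes} by citing Theorem~\ref{theorem:mainResult}(ii) (i.e., Lemma~\ref{lemma:existenceOfCut}, applicable because \eqref{kappaUB} forces $\kappa(\rho^\circ)<+\infty$, so the limit density exists) together with the definition of $\kappa(\rho^\circ)$. Where you genuinely go beyond the paper is step (c): the paper's proof stops at that citation and never argues the strict inequality $\lambda_\S>C_\S$, and Lemma~\ref{lemma:existenceOfCut} only delivers $\lambda_\S\ge C_\S$, so your remark that plain mass balance leaves the boundary case open is accurate, and your Gr\"onwall argument closes it correctly. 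Indeed, for $t<\kappa(\rho^\circ)$ all densities are strictly below their buffers, so for $e\in\partial_\S^+$ the point obtained from $\rho^e(t)$ by raising its $e$-th coordinate to $\rhomax_e$ lies in $\Rb_e$, and Lipschitz continuity together with \eqref{equation:definitionFlowCongestedUpstream} gives $\fout_e(\rho^e)\ge \fmax_e-L(\rhomax_e-\rho_e)$; for $j\in\partial_\S^-$ one has $\E_j^+\subseteq\E_\S^+$ and \eqref{equation:definitionFlowCongestedDownstream} gives $\fout_j(\rho^j)\le L\sum_{k\in\E_j^+}(\rhomax_k-\rho_k)$; both correction terms are dominated by a multiple of $\Phi$, so $\dot\Phi\ge (C_\S-\lambda_\S)-L'\Phi$ holds on all of $[0,\kappa(\rho^\circ))$, and $\lambda_\S\le C_\S$ would keep $\Phi(t)\ge\Phi(t_0)e^{-L'(\kappa(\rho^\circ)-t_0)}>0$, contradicting $\Phi(t)\to0$ at the finite time $\kappa(\rho^\circ)$. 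On this point your write-up is more complete than the paper's.

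Two minor caveats. First, in step (b) your claim that every downstream link $k\in\E_e^+$ of $e\in\partial_\S^+$ stays bounded away from $B_k$ is not guaranteed: some of those links may belong to $\partial_\S^-$ and do hit their buffers; what the paper's construction actually gives (since $\tau_e\notin\S$) is only $\E_{\tau_e}^+\not\subseteq\B$, which suffices for the limit point to lie in $\Rb_e$ and hence, via \eqref{equation:definitionFlowCongestedUpstream}, for $\fout_e\to\fmax_e$. This imprecision is harmless, since step (c) relies only on the pointwise Lipschitz bounds, not on these limits. Second, for $e\in\partial_\S^+\cap\E_\D^-$ the domain of $f^e$ is $[0,\rhomax_e)$, so the bound $\fout_e\ge\fmax_e-L(\rhomax_e-\rho_e)$ requires reading \eqref{equation:definitionFlowCongestedUpstream} for the continuous (Lipschitz) extension to $\rho_e=\rhomax_e$ --- the same convention the paper itself uses in the proof of Lemma~\ref{lemma:propertiesWBCF}.
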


Proposition \ref{corollary:FiniteBuffer} states that, if the buffer capacities are finite and some cut constraints are violated, then, for every initial density $\rho^\circ$, all the links in $\E_{\S}^+$, where $\mc S$ is a cut such that $\lambda_\S>C_\S$, will reach their buffer capacities simultaneously at time $\kappa(\rho^\circ)$. We notice that, when there are multiple cuts violating the capacity constraint, then the cut $\S$ may depend on the initial condition $\rho^\circ$. The dependence on the initial density $\rho^\circ$ is also evident in \eqref{kappaUB}. While it may be tempting to identify the cut $\U$ minimizing the right hand side of \eqref{kappaUB} with the cut $\S$ of \eqref{Sexplodes}, it is worth stressing that \eqref{kappaUB} is merely an upper bound on $\kappa(\rho^\circ)$. In fact, in contrast to the right-hand side of \eqref{kappaUB}, the cut $\S$ of \eqref{Sexplodes} may depend on finer details of the routing policy, rather than just its inflow and buffer capacities. 


\subsection{Overload behavior with infinite buffer capacities}
The following result, similar to Proposition \ref{corollary:FiniteBuffer}, characterizes the way congestion occurs in case of infinite buffer capacities.
\begin{proposition}
\label{proposition:InfiniteBuffer}
Let $\mc G=(\mc V,\mc E,C)$ be a network satisfying Assumption \ref{assumption:connectivity} with vector of inflows $\lambda \in \RR_+^{\mc V \setminus \mc D}$ and buffer capacities $B_e=+\infty$, $e\in\E$. Let $f$ be a strongly monotone distributed routing policy. Assume that $\max_{\mc U}\left\{\lambda_{\U}-C_{\U}\right\}\ge0$.
Let 
\be\label{U*def}\mc U^*:=\bigcup\nolimits_{\mc U\in\mc M}\mc U\,,\qquad \mc M:=\argmax_{\U}\left\{\lambda_{\U}-C_{\U}\right\}\,.\ee
Then, for every $\rho^\circ \in \R$, the solution $\rho(t)$ of the dynamical flow network \eqref{dynsysexample} with initial condition $\rho(0) = \rho^\circ \in \R$ is such that $\kappa(\rho^\circ)=+\infty$ and
\be\label{limE*}
\begin{split}
& \lim_{t \to + \infty}\rho_e(t) = +\infty\,,\qquad \forall e\in\E_{\U^*}^+\,,\\ & \lim_{t \to + \infty} \frac{1}{t}\sum_{e\in\E_{\U^*}^+}\rho_e(t) =  \lambda_{\U^*} - C_{\U^*}\,.
\end{split}
\ee
Moreover, there exist $\rho_e^*\in [0,+\infty)$,  $e\in\E\setminus(\mc E^+_{\U^*}\cup\partial_{\U^*}^-)$, such that 
\be\label{limE*S}\lim_{t\to +\infty}\rho_e(t) = \rho_e^*\,,\qquad\forall e\in\E\setminus(\mc E^+_{\U^*}\cup\partial_{\U^*}^-)\,,\ee 
for every initial density $\rho^\circ \in \R$.
\end{proposition}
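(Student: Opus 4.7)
My plan is to combine a mass-conservation identity across the cut $\U^*$ with Theorem~\ref{theorem:mainResult}(ii), invoking the $l_1$-contraction principle (Lemma~\ref{lemma:l1Contraction}) and Lemma~\ref{lemma:propertiesWBCF} to pin down the asymptotics. The crucial preliminary is a lattice structure for $\mathcal M$: since $\U\mapsto\lambda_\U$ is modular and $\U\mapsto C_\U$ is submodular — a case analysis on the endpoints of any link shows $\onesbf_{\partial^+_{\U_1}}+\onesbf_{\partial^+_{\U_2}}\ge \onesbf_{\partial^+_{\U_1\cup\U_2}}+\onesbf_{\partial^+_{\U_1\cap\U_2}}$ — the map $\U\mapsto\lambda_\U-C_\U$ is supermodular and $\mathcal{M}$ is closed under set union, so $\U^*\in\mathcal{M}$ and $\lambda_{\U^*}-C_{\U^*}\ge 0$. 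Global existence $\kappa(\rho^\circ)=+\infty$ is immediate since $\fout_e\le C_e$ forces densities to grow at most linearly.

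Summing \eqref{equation:system} over $e\in\E^+_\U$ for an arbitrary cut $\U\subseteq\V\setminus\D$ and cancelling all flows internal to $\U$ via node-wise conservation yields the key identity
\begin{equation*}
\frac{d}{dt}\sum_{e\in\E^+_\U}\rho_e(t) \;=\; \lambda_\U + \sum_{j\in\partial^-_\U}\fout_j(\rho^j(t)) - \sum_{e\in\partial^+_\U}\fout_e(\rho^e(t)).
\end{equation*}
Applied to $\U=\U^*$, the bounds $\fout_j\ge 0$ and $\sum_{e\in\partial^+_{\U^*}}\fout_e\le C_{\U^*}$ give $\dot\Sigma_{\U^*}\ge \lambda_{\U^*}-C_{\U^*}\ge 0$, so $\Sigma_{\U^*}(t):=\sum_{e\in\E^+_{\U^*}}\rho_e(t)\to+\infty$. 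Conversely, once divergence $\rho_e(t)\to+\infty$ is known for every $e\in\E^+_{\U^*}$, Lipschitz continuity together with the boundary conditions \eqref{equation:definitionFlowCongestedUpstream}--\eqref{equation:definitionFlowCongestedDownstream} give $\fout_e(\rho^e)\to C_e$ for $e\in\partial^+_{\U^*}$ and $\fout_j(\rho^j)\to 0$ for $j\in\partial^-_{\U^*}$ (since every downstream link of $j$ lies in $\E^+_{\U^*}$). Plugging these limits back into the identity yields $\dot\Sigma_{\U^*}\to\lambda_{\U^*}-C_{\U^*}$, and the second line of \eqref{limE*} follows by Cesaro averaging.

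It remains to establish individual divergence on $\E^+_{\U^*}$ and convergence on $\E\setminus(\E^+_{\U^*}\cup\partial^-_{\U^*})$. Theorem~\ref{theorem:mainResult}(ii) provides some cut $\S$ with $\rho_e(t)\to+\infty$ on $\E^+_\S$; I would then define the maximal diverging cut $\tilde\U:=\{v\in\V\setminus\D:\rho_e(t)\to+\infty\,\,\forall e\in\E^+_v\}\supseteq\S$ and, by running the same mass-conservation argument on $\tilde\U$, deduce $\lambda_{\tilde\U}-C_{\tilde\U}\ge 0$. Supermodularity and the maximality of $\U^*$ in $\mathcal M$ then force $\tilde\U\supseteq\U^*$, using Lemma~\ref{lemma:propertiesWBCF} to exclude the possibility of a node $v\in\U^*\setminus\tilde\U$ whose outgoing flows remain in a bounded limit regime incompatible with $\Sigma_{\U^*}(t)\to+\infty$; the reverse inclusion $\tilde\U\subseteq\U^*$ follows from the growth-rate computation of the previous paragraph together with maximality of $\lambda_\U-C_\U$. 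For links in $\E\setminus(\E^+_{\U^*}\cup\partial^-_{\U^*})$ — entirely contained in the subnetwork on $\V\setminus\U^*$ — the restricted dynamics is again a strongly monotone distributed routing, with inflows supplemented by the asymptotic values $C_e$ along each $e\in\partial^+_{\U^*}$; maximality of $\U^*$ guarantees that no cut in the restricted network violates capacity, so Theorem~\ref{theorem:mainResult}(i) gives convergence to an equilibrium, and the $l_1$-contraction principle (Lemma~\ref{lemma:l1Contraction}) then forces independence of the limit $\rho^*_e$ from the initial condition.

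The principal obstacle will be closing the inclusion $\tilde\U\supseteq\U^*$: the mass-conservation lower bound only yields divergence of the sum $\Sigma_{\U^*}(t)$, not of every individual component. Ruling out partial divergence within $\U^*$ requires the fine characterization of limiting flow configurations around a node provided by Lemma~\ref{lemma:propertiesWBCF}, together with strong monotonicity to propagate divergence across the cut.
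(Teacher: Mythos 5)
Your skeleton matches the paper's strategy at a high level (maximality of $\U^*$ via closure of $\mc M$ under unions -- your submodularity argument is an acceptable substitute for the paper's explicit computation in Lemma~\ref{lem:maximal-cut} -- cut mass balance, Cesaro averaging, and a reduction to the subnetwork on $\V\setminus\U^*$), but the decisive steps are missing. First, Lemmas~\ref{lemma:propertiesWBCF} and \ref{lemma:existenceOfCut} are only available when the trajectory admits a limit density, which for infinite buffers and arbitrary $\rho^\circ$ is not guaranteed: your maximal diverging cut $\tilde\U$, the deduction $\lambda_{\tilde\U}-C_{\tilde\U}\ge0$, and the claimed limits $\fout_e\to C_e$ on $\partial^+_{\U^*}$ and $\fout_j\to0$ on $\partial^-_{\U^*}$ all require the \emph{finite} coordinates of the relevant local density vectors to converge, which you never establish. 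The paper circumvents this by first taking $\rho^\circ=\zerobf$, where monotonicity makes $t\mapsto\rho(t)$ componentwise nondecreasing and hence convergent in $[0,+\infty]^{\E}$, and only afterwards extends to general $\rho^\circ$ -- \eqref{limE*} via the $l_1$-contraction of Lemma~\ref{corollary:contraction}, and \eqref{limE*S} by a separate monotone sandwich. Second, the inclusion $\U^*\subseteq\tilde\U$, which you yourself flag as the principal obstacle, is precisely the nontrivial half of the paper's Lemma~\ref{lemma:S=U}, and ``supermodularity plus Lemma~\ref{lemma:propertiesWBCF}'' does not deliver it: the paper's argument needs the strict flow bounds supplied by strong monotonicity at finite limit densities ($\limsup_t\fout_j<C_j$, $\liminf_t\fout_k>0$), Assumption~\ref{assumption:connectivity} to handle the degenerate case in which the relevant boundary sets are empty, Barbalat's lemma to convert boundedness on $\W$ into $\dot\rho_e\to0$, and the maximality of $\U^*$ to obtain a strictly positive net inflow into $\A=\U^*\setminus\S$, contradicting boundedness. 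Your proposal leaves exactly this contradiction unproved, so the divergence of every link of $\E^+_{\U^*}$ (and its independence of $\rho^\circ$) is not established.

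Third, your treatment of \eqref{limE*S} is also insufficient as stated. The $l_1$-contraction gives a nonincreasing, not vanishing, distance between two trajectories, so it cannot by itself force the limits on $\E\setminus(\E^+_{\U^*}\cup\partial^-_{\U^*})$ to be independent of the initial condition; moreover, the restricted dynamics you invoke has time-varying boundary inflows (the outflows on $\partial^+_{\U^*}$ only \emph{converge} to $C_e$), so Theorem~\ref{theorem:mainResult}(i) does not apply verbatim. The paper instead defines an autonomous reduced network on $\V\setminus\S$ with constant inflows $\lambda_{\hat v}+\sum_{e\in\E^-_{\hat v}\cap\partial^+_{\S}}C_e$, checks via $\S=\U^*$ and maximality that every cut of the reduced network is strictly feasible, applies Theorem~\ref{theorem:mainResult}(i) there, and then sandwiches the true trajectory between the nondecreasing $\zerobf$-trajectory and the reduced dynamics using input-output monotonicity, treating the densities on $\E^+_{\S}\cup\partial^-_{\S}$ as inputs. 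Supplying these three missing pieces is essentially reproving the paper's Lemma~\ref{lemma:S=U} and the final sandwich argument, so the proposal as written has genuine gaps at the core of the proposition.
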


Proposition \ref{proposition:InfiniteBuffer} implies that, for infinite buffer capacities on all the links, there exists a cut $\U^*$, independent of initial condition $\rho^\circ$, such that, asymptotically, all the links in $\E_{\U^*}^+$ get congested. This is to be contrasted with the finite buffer capacity case, where the cut depends on the initial condition $\rho^\circ$. In addition, by \eqref{limE*S}, the densities on the links which do not get congested approach a unique limit point, and by \eqref{limE*} the total density grows linearly in time.
\newmaterial{In particular, the growth rate corresponds to the throughput loss in the network. As such, a throughput loss equal to $\delta$ is obtained by perturbing the network in such a way that the min-cut capacity of the perturbed network is $\tilde{\fmax}_\G = \lambda_{\V\setminus\D} - \delta$. Therefore, in terms of resilience, Proposition~\ref{proposition:InfiniteBuffer} yields $\nu(\delta) = \fmax_\G - \tilde{\fmax}_\G = \fmax_\G - \lambda_{\V\setminus\D} + \delta$, which is the maximum possible.} 
A comparison is due with \cite{ShahQS11}, which studies an acyclic queuing network with set of queues $\Q$ employing max-weight algorithm. It is shown that if $q(t) \in \RR_+^\Q$ is the vector of queue lengths, then $q(t)/t \to \hat{q}$ where $\hat{q}\in \RR_+^\Q$ is the solution to an optimization problem related to the parameters of the max-weight algorithm.

\section{Proofs}
\label{section:proof}

In this section we provide an $l_1$-contraction principle for monotone dynamical systems under conservation laws and prove that it applies to \eqref{flowdynamics}. We then characterize the behavior of dynamical flow networks when the vector of densities admits a limit point. Finally, we prove the main results.

\subsection{$l_1$-contraction principle for monotone conservation laws}

We state and prove an $l_1$-contraction principle for a class of monotone dynamical systems under conservation laws, which includes system \eqref{dynsysexample} under monotone distributed routing policy. As such, it will be instrumental in proving existence and stability of equilibria for dynamical flow networks.

\begin{lemma}
\label{lemma:l1Contraction} For a non-empty closed hyper--rectangle $\Omega\subseteq\RR^n$, let $g:\Omega\to\RR^n$ be
Lipschitz and such that
    \be
    \label{equation:contractionAssumption1}
        \frac{\partial}{\partial x_j}g_i(x)\ge0\,,\qquad \forall\,i\ne j\in\{1,\ldots,n\}
    \ee
    \be
    \label{equation:contractionAssumption2}
            \sum_{1\le i\le n}\frac{\partial}{\partial x_j}g_i(x)\le0\,,\qquad \forall\,j \in\{1,\ldots,n\}
    \ee
 for almost every $x\in\Omega$.
Then \be \!\!\!\!\label{ineq}\sum_{1\le i\le n}\sgn{x_i-y_i}\left(g_i(x)-g_i(y)\right) \leq 0,\quad x, y \in
\Omega\,.\ee

Moreover, if
    \begin{enumerate}
        \item[(i)] there exists some $ j\in\{1, \dots, n\}$ such that the inequality \eqref{equation:contractionAssumption2}
        is strict for almost all $x \in \Omega$,
    \end{enumerate}
then inequality \eqref{ineq} is strict for all $x, y\in\Omega$ such that $x_j\ne y_j$.

If
    \begin{enumerate}
        \item[(ii)] for every proper subset $\mc K\subseteq\{1,\ldots,n\}$, there exist $i\in\mc K$, and
            $j\in\{1,\ldots,n\}\setminus\mc K$ such that inequality \eqref{equation:contractionAssumption1} is strict for almost all $x\in\Omega$,
    \end{enumerate}
then inequality \eqref{ineq} is strict for all $x\ne y$ such that $x\not< y$ and $y \not< x$.

Finally, if (i) and (ii) hold true, then inequality \eqref{ineq} is strict for all $x, y\in\Omega$ such that $x \neq y$.
\end{lemma}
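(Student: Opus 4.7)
The plan is to reduce \eqref{ineq} to a sign analysis of an averaged Jacobian of $g$. The first step is the Newton-Leibniz identity
\[
g_i(x) - g_i(y) = \sum_{j=1}^n H_{ij}(x,y)(x_j - y_j), \qquad H_{ij}(x,y) := \int_0^1 \frac{\partial g_i}{\partial x_j}\bigl(y + t(x-y)\bigr)\,dt.
\]
The a.e.~assumptions \eqref{equation:contractionAssumption1} and \eqref{equation:contractionAssumption2} transfer entry-wise to the averaged matrix $H(x,y)$: $H_{ij}(x,y) \ge 0$ for all $i \ne j$ and $\sum_{i} H_{ij}(x,y) \le 0$ for every $j$. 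Setting $s_i := \sgn{x_i - y_i}$ and using $z_j = s_j |z_j|$ (with the convention $s_j\cdot 0 = 0$), the left-hand side of \eqref{ineq} rewrites as $\sum_{j:\,s_j \ne 0} |z_j|\,\Phi_j(x,y)$, where $\Phi_j(x,y) := s_j \sum_{i} s_i H_{ij}(x,y)$.

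The non-strict claim then follows from the key one-step bound
\[
\Phi_j = H_{jj} + \sum_{i \ne j} s_i s_j H_{ij} \;\le\; H_{jj} + \sum_{i \ne j} H_{ij} = \sum_{i} H_{ij} \;\le\; 0,
\]
obtained by splitting off the diagonal and using $s_i s_j \le 1$ together with off-diagonal non-negativity of $H$, and then summing with nonnegative weights $|z_j|$. For the strict version under~(i), strictness of \eqref{equation:contractionAssumption2} at the distinguished index $j^*$ transfers to $\sum_{i} H_{ij^*}(x,y) < 0$, whence $\Phi_{j^*} < 0$; whenever $x_{j^*} \ne y_{j^*}$, the $j^*$-summand is strictly negative while all others are $\le 0$, giving the strict inequality in~(i).

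The delicate case is~(ii). I would partition $\{1,\dots,n\}$ into $\mc K_+, \mc K_-, \mc K_0$ according to $\sgn{x_i - y_i}$ and exhibit a non-empty proper subset $\mc K$ for which the index pair $(i,j)$ supplied by~(ii) actually yields $|z_j| > 0$ and $s_i s_j < 1$: take $\mc K = \mc K_+ \cup \mc K_0$ when $\mc K_- \ne \emptyset$; otherwise (noting that $x \ne y$ and $y \not< x$ then force both $\mc K_+$ and $\mc K_0$ to be non-empty) take $\mc K = \mc K_0$. In either case, any $j \notin \mc K$ has $s_j \ne 0$ and any $i \in \mc K$ has $s_i s_j \le 0 < 1$, so applying~(ii) to this $\mc K$ produces $(i,j)$ with $H_{ij}(x,y) > 0$, contributing a strictly positive slack $(1 - s_i s_j) H_{ij}$ in the one-step bound and hence $\Phi_j < 0$ strictly at an index with $|z_j| > 0$. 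For the combined statement under both (i) and (ii), any $x \ne y$ is either linearly ordered---so $x_{j^*} \ne y_{j^*}$ and (i) applies---or incomparable and (ii) applies. The principal obstacle is precisely this combinatorial choice of $\mc K$ in case~(ii): the naive $\mc K = \mc K_+$ fails whenever $\mc K_- = \emptyset$ but $\mc K_0 \ne \emptyset$, since the irreducibility edge guaranteed by~(ii) then lies at an index with $|z_j| = 0$ and contributes nothing. A secondary technical point, standardly handled by mollifying $g$ into a $C^1$ approximation inheriting \eqref{equation:contractionAssumption1} and \eqref{equation:contractionAssumption2} and passing to the limit, is the transfer of the a.e.~hypotheses on $g$ to pointwise properties of $H(x,y)$, since the segment $[y,x]$ need not meet the Rademacher-differentiability set of $g$.
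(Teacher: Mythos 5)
Your argument is correct and proves all four claims, but it takes a genuinely different route from the paper's. The paper never integrates along the straight segment from $y$ to $x$: it introduces the coordinatewise join $\xi$ (with $\xi_i=x_i$ on $\mc I=\{i:x_i>y_i\}$ and $\xi_i=y_i$ on $\mc I^c$) and integrates the row-block sums $g_{\mc I}$, $g_{\mc J}$ along the two axis-aligned monotone paths $\gamma_{\mc I}$ from $y$ to $\xi$ and $\gamma_{\mc J}$ from $x$ to $\xi$; inequality \eqref{ineq} then follows because the four correction terms $\Gamma^{\mc I^c}_{\mc I},\Gamma^{\mc I}_{\mc J},\Gamma^{\mc J}_{\mc I},\Gamma^{\mc J^c}_{\mc J}$ are nonnegative by \eqref{equation:contractionAssumption1}, and strictness under (ii) comes from applying the hypothesis to $\mc K=\mc I^c$ or $\mc J^c$ to force $\Gamma^{\mc I^c}_{\mc I}>0$ or $\Gamma^{\mc J^c}_{\mc J}>0$. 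Note that your case analysis selects exactly these subsets ($\mc K_+\cup\mc K_0=\mc J^c$ when $\mc K_-\neq\emptyset$, and $\mc K_0=\mc I^c$ when $\mc K_-=\emptyset$), so your handling of the delicate point in (ii) coincides in substance with the paper's, just expressed column-wise. What your route buys is a cleaner linear-algebraic picture: the averaged Jacobian $H(x,y)$ is a Metzler matrix with nonpositive column sums, and the one-step bound $\Phi_j\le\sum_i H_{ij}\le 0$ with slack $(1-s_is_j)H_{ij}$ localizes both strictness arguments to a single column with $|z_j|>0$; the paper's join construction instead avoids tracking the mixed signs $s_is_j$ because along each of its two paths all varying coordinates move in the same direction. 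One caveat: the technical point you flag (the a.e.\ hypotheses need not be available on the measure-zero segment) is equally present, and left implicit, in the paper's path integrals, so it is not a gap relative to the published proof; be aware, though, that your mollification remedy as stated only secures the non-strict inequality \eqref{ineq}, since strict inequalities such as $\sum_i H^{\epsilon}_{ij^*}<0$ or $H^{\epsilon}_{ij}>0$ do not survive the limit $\epsilon\to 0$, so for the strict claims you are ultimately relying on the same implicit assumption the paper makes, namely that the derivative conditions hold along the relevant segments.
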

\begin{proof}
First note that, according to Rademacher's theorem, e.g., see \cite{evans1992measure}, Lipschitz continuity implies differentiability almost everywhere. 
For $\mc A\subseteq\{1,\ldots,n\}$, put $\mc A^c:=\{1,\ldots,n\}\setminus \mc A$, and $g_{\mc
A}(z):=\sum_{a\in\mc A}g_a(z)$. Fix some $x,y\in\Omega$, and put $ \mc I = \{i: x_i > y_i\}$, $\mc J = \{i: x_i < y_i\}
$. Let $\xi\in\Omega$ be such that $\xi_i = x_i$ for $ i \in \mc I$ and $\xi_i=y_i$ for $i \in \mc I^c$. Consider the
segments $\gamma_{\mc I}$ from $y$ to $\xi$ and $\gamma_{\mc J}$ from $x$ to $\xi$. For $\mc A\subseteq\{1,\ldots,n\}$,
and $\mc B\in\{\mc I,\mc J\}$, define the path integral
    $$
        \Gamma^{\mc A}_{\mc B}:=\int_{\gamma_{\mc B}}\nabla g_{\mc A}(z)\cdot \de z\,.
    $$
Then, \eqref{equation:contractionAssumption2} implies that
    \begin{align}
      	g_{\mc I}(x)-g_{\mc I}(y)
        	&	=\ds  \Gamma^{\mc I}_{\mc I}-\Gamma^{\mc I}_{\mc J} \le -\Gamma^{\mc I^c}_{\mc I} -\Gamma^{\mc I}_{\mc J} \label{ineq1}
        		\\
       g_{\mc J}(x)-g_{\mc J}(y) 
       		&	= \Gamma^{\mc J}_{\mc I}-\Gamma^{\mc J}_{\mc J} \ge  \Gamma^{\mc J}_{\mc I}+\Gamma^{\mc J^c}_{\mc J}\,.\label{ineq2}
    \end{align}

\newmaterial{
Denoting  $s_i:=\sgn{x_i-y_i}$, the definition of $\I$ and $\mc J$, and \eqref{ineq1} and \eqref{ineq2}, yield}
	\begin{align*}
		\sum_{i}s_i\left(g_i(x)-g_i(y)\right)	
			&	=	g_{\mc I}(x)-g_{\mc I}(y)-g_{\mc J}(x)+g_{\mc J}(y)	\\
			& \le -\Gamma^{\mc I^c}_{\mc I}-\Gamma^{\mc I}_{\mc J}-\Gamma^{\mc J}_{\mc I}-\Gamma^{\mc J^c}_{\mc J}\,.
	\end{align*}
	
Observe that, by \eqref{equation:contractionAssumption1}, $\mc A\cap\mc B=\emptyset$ implies $\Gamma^{\mc
A}_{\mc B}\ge0\,$, so that \eqref{ineq} follows immediately.

Notice that, if there exists some $ j\in\{1, \dots, n\}$ such that inequality \eqref{equation:contractionAssumption2}
is strict for almost every $x \in \Omega$, and $x_j>y_j$ ($x_j<y_j$), then \eqref{ineq1} (respectively, \eqref{ineq2})
is a strict inequality, hence so is \eqref{ineq}, thus proving the second claim.

Now, assume that $x \neq y$, $x\not< y$ and $y \not< x$. Then, it follows from the definition of the sets $\mc I$ and $\mc J$ that the sets $\mc I^c$ and $\mc J^c$ are non-empty. We also have that $\mc I^c \cap \mc J^c = \setdef{i \in \until{n}}{x_i=y_i}$. Since $x \neq y$, this implies that $\mc I^c \cap \mc J^c \neq \until{n}$. Therefore, at least one of $\mc I^c$ and $\mc J^c$ is a proper subset of $\until{n}$. If say $\mc I^c$ is a proper subset, then the condition in (ii) in the statement of the lemma implies that \eqref{equation:contractionAssumption1} is strict for some $i \in \mc I$ and $j \in \mc I^c$. Therefore, $\Gamma^{\I^c}_{\I}>0$, and the third claim follows.

Finally, the last claim is implied by the previous two: if $x < y$ or $y < x$, then trivially $x_j \neq y_j$ for all $j \in \until{n}$ and the strict inequality in \eqref{ineq} follows from the claim associated with condition (i); if $x \not< y$ and $y \not< x$, the strict inequality in \eqref{ineq} follows from the claim associated with  condition (ii).
\end{proof}

Lemma~\ref{lemma:l1Contraction} implies the following $l_1$-contraction principle for dynamic networks with monotone distributed routing policies.

\begin{lemma}
\label{corollary:contraction}
Let $\mc G=(\mc V,\mc E,C)$ be a network satisfying Assumption \ref{assumption:connectivity}, $f$ be a monotone distributed routing policy, and $\hat\rho^\circ,\tilde\rho^\circ \in \R$. 
Let $\hat\rho(t)$ and $\tilde\rho(t)$ be the solutions to the system \eqref{dynsysexample} with initial conditions $\hat\rho(0)=\hat\rho^{\circ}$, and $\tilde\rho(0)=\tilde\rho^{\circ}$, respectively. 
Define $\varphi(t) := ||\hat\rho(t) - \tilde\rho(t)||_1$ for $0\leq t < \min\{\kappa(\hat\rho^\circ),\kappa(\tilde\rho^\circ)\}$. Then $\dot{\varphi}(t) \leq 0$. Moreover, if the routing policy is strongly monotone, then $\dot{\varphi}(t) = 0$ if and only if $\hat\rho(t) = \tilde\rho(t)$.
\end{lemma}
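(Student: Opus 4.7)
The plan is to identify the flow dynamics \eqref{dynsysexample} with the Lipschitz vector field $g:\mc R\to\RR^\mc E$ defined by $g_e(\rho)=\fin_e(\rho)-\fout_e(\rho)$, and then apply Lemma~\ref{lemma:l1Contraction} pointwise in $t$ with $x=\hat\rho(t)$, $y=\tilde\rho(t)$. Since $\hat\rho$ and $\tilde\rho$ are Lipschitz in $t$, $\varphi$ is absolutely continuous and hence differentiable almost everywhere; at any such differentiability point a standard computation (adopting $\mathrm{sgn}(0)=0$ to handle indices where $\hat\rho_e=\tilde\rho_e$) gives
\[
\dot\varphi(t)=\sum_{e\in\mc E}\mathrm{sgn}(\hat\rho_e(t)-\tilde\rho_e(t))\bigl(g_e(\hat\rho(t))-g_e(\tilde\rho(t))\bigr),
\]
which is exactly the left-hand side of \eqref{ineq}. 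Once the hypotheses of Lemma~\ref{lemma:l1Contraction} are checked for $g$ on $\mc R$, the (weak) conclusion translates to $\dot\varphi\le 0$; under strong monotonicity, the strict form of the same conclusion together with forward uniqueness of ODE solutions yields the equivalence $\dot\varphi(t)=0\iff\hat\rho(t)=\tilde\rho(t)$.

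To verify the standing hypotheses I would carry out a case analysis of $\partial g_i/\partial\rho_j$ for $i\ne j$: the only potentially nonzero terms are (a) contributions to $\fin_i$ when $j$ is upstream of $i$ or a sibling of $i$ at $\sigma_i$, both of which are nonnegative by \eqref{equation:monotone1}; and (b) contributions to $-\fout_i$ when $j\in\mc E^+_i$, nonnegative by \eqref{equation:monotone2}. Together these yield the cooperativity property \eqref{equation:contractionAssumption1}. Hypothesis \eqref{equation:contractionAssumption2} then follows from mass conservation: summing the ODE and canceling internal link-to-link flows via $F_{ij}\equiv 0$ whenever $\tau_i\ne\sigma_j$ gives
\[
\sum_{e\in\mc E}g_e(\rho)=\sum_{v\in\mc O}\lambda_v-\sum_{e\in\mc E^-_\mc D}\fout_e(\rho_e),
\]
so that $\sum_i\partial g_i/\partial\rho_j=0$ for $j\notin\mc E^-_\mc D$ and $\sum_i\partial g_i/\partial\rho_j=-\partial\fout_j/\partial\rho_j\le 0$ for $j\in\mc E^-_\mc D$, the sign of the latter coming from \eqref{equation:monotone1} applied with $k=j$ to the unique downstream external link $e^{\tau_j}$.

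For the second half of the statement I would check the strict conditions (i) and (ii) of Lemma~\ref{lemma:l1Contraction} under strong monotonicity. Condition (i) is immediate for any $j\in\mc E^-_\mc D$ (nonempty by Assumption~\ref{assumption:connectivity}), as strong \eqref{equation:monotone1} makes $\partial\fout_j/\partial\rho_j>0$ strict. Condition (ii) reduces, via the same case analysis, to the graph-theoretic claim that for every nonempty proper $\mc K\subsetneq\mc E$ one can find $i\in\mc K$ and $j\in\mc E\setminus\mc K$ sharing an endpoint in one of the three \emph{productive} configurations $\tau_i=\sigma_j$, $\tau_j=\sigma_i$, or $\sigma_i=\sigma_j$; each of these configurations upgrades the nonnegative entry above to a strictly positive one via the strong form of \eqref{equation:monotone1} or \eqref{equation:monotone2}. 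The final clause of Lemma~\ref{lemma:l1Contraction} then promotes $\dot\varphi\le 0$ to $\dot\varphi(t)<0$ at every $t$ with $\hat\rho(t)\ne\tilde\rho(t)$, which is the content of the strong-monotonicity statement.

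The most delicate step is establishing condition (ii) in full generality: although Assumption~\ref{assumption:connectivity} provides strong connectivity of $\mc G^a$, the connecting paths may bridge $\mc E$ only through the auxiliary node $w$ along auxiliary links $e_v\in\mc E^-_\mc O$ or $e^d\in\mc E^+_\mc D$ that are not elements of $\mc E$, so productive link-adjacency inside $\mc E$ does not come for free. My intended remedy is to decompose $\mc E$ into the edge sets of the undirected connected components of $\mc G$; by strong connectivity of $\mc G^a$, each such component must contain at least one origin and at least one destination, so within it both (i) and (ii) hold by the arguments above, the vector field $g$ block-decouples, and $\varphi$ decomposes as a sum of component-wise $l_1$ norms, each strictly decreasing whenever the corresponding block of $\hat\rho$ differs from that of $\tilde\rho$.
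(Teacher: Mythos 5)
Your route is the paper's own: identify $g_e(\rho)=\fin_e(\rho)-\fout_e(\rho)$, check \eqref{equation:contractionAssumption1}--\eqref{equation:contractionAssumption2} from \eqref{equation:monotone1}--\eqref{equation:monotone2} (your mass-conservation computation of the column sums, zero off $\E_\D^-$ and $-\partial\fout_j/\partial\rho_j$ on $\E_\D^-$, is exactly the paper's verification of condition (i)), and then invoke the strict clauses of Lemma~\ref{lemma:l1Contraction}. The part you correctly flag as delicate is, however, where your argument has a genuine gap: the remedy of decomposing $\E$ into the edge sets of the \emph{undirected} connected components of $\G$ does not make condition (ii) true within a component. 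The productive configurations you list ($\tau_i=\sigma_j$, $\tau_j=\sigma_i$, $\sigma_i=\sigma_j$) all require the shared node to be the \emph{tail} of at least one of the two links; two links meeting only head-to-head are uncoupled, because neither appears in the other's local density vector $\rho^e$. Concretely, take $\V=\{a,b,d\}$, $\E=\{(a,d),(b,d)\}$, $\lambda_a,\lambda_b>0$, $\D=\{d\}$: Assumption~\ref{assumption:connectivity} holds, $\G$ is one undirected component containing an origin and a destination, yet for $\K=\{(a,d)\}$ no pair $i\in\K$, $j\in\K^c$ is in a productive configuration ($g_{(a,d)}$ depends only on $\rho_{(a,d)}$), so condition (ii) fails for that component; the same happens whenever distinct branches merge only at a common head node, in particular at a destination (e.g.\ two disjoint chains $a\to x\to d$ and $b\to y\to d$). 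Note that the paper's own proof asserts the adjacency claim for arbitrary $\K\subsetneq\E$ with no caveat, and it fails on the same example, so your instinct that this needs care is right -- but the component you chose is not the right one.

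The fix is to decompose with respect to the coupling relation itself, i.e.\ the equivalence classes generated by the configurations (a)/(b) (links sharing a node that is the tail of at least one of them), so that the vector field block-decouples exactly along these classes and condition (ii) holds within each class by construction with strict inequalities from strong monotonicity. What then needs proving is condition (i) \emph{per class}: every class contains a link of $\E_\D^-$, because by Assumption~\ref{assumption:connectivity} there is a directed path from the head of any link to some destination, consecutive links of a directed path are head-to-tail adjacent and hence lie in the same class, and the last link of that path belongs to $\E_\D^-$, where strong monotonicity gives $\partial\fout_j/\partial\rho_j>0$. With that substitution (classes of the coupling relation in place of undirected components, and the path-to-destination argument in place of ``each component contains an origin and a destination''), your blockwise summation of the strict version of \eqref{ineq} goes through and yields the stated equivalence; the rest of your proposal, including the a.e.\ differentiability of $\varphi$ and the uniqueness argument for the ``only if'' direction, is fine and matches the paper's level of rigor.
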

\begin{proof}
It is easily verified that the properties of monotone distributed routing policies \eqref{equation:monotone1} and \eqref{equation:monotone2} imply \eqref{equation:contractionAssumption1} and \eqref{equation:contractionAssumption2} for the function $g_e(\rho):=\fin_e(\rho)-\fout_e(\rho)$. Therefore, the first claim in Lemma~\ref{lemma:l1Contraction} gives 
$$\dot{\varphi}(t) = \sum_{e}\sgn{\hat\rho_{e}(t) - \tilde\rho_{e}(t)}(g_e(\hat\rho(t)) - g_e(\tilde\rho(t)))\le0\, $$
if the distributed routing policy is monotone. 

We now show that conditions (i) and (ii) in Lemma~\ref{lemma:l1Contraction} follow from the strong monotonicity property of the distributed routing policies. To that effect, for any $j \in \mc E_{\mc D}^-$, we have that
	\begin{align*}		
		\frac{\partial}{\partial \rho_j}\sum_{e \in \mc E} g_e(\rho)
			&=	\frac{\partial}{\partial \rho_j} \Bigg(\sum_{v \in \O} \lambda_v - \sum_{i \in \mc E_{\mc D}^-} \fout_i(\rho^i)  \Bigg)
				\\
			&	=	-\frac{\partial}{\partial \rho_j}  \fout_j(\rho^j) < 0
	\end{align*}
where the strict inequality follows from the strict version of \eqref{equation:monotone1} characterizing strongly monotone routing policies. This establishes condition (i) in Lemma~\ref{lemma:l1Contraction}. In order to connect condition (ii) in Lemma~\ref{lemma:l1Contraction}, consider any proper subset $\mc K \subsetneq \mc E$. It is easily seen that there exist $i \in \mc K$ and $j \in \mc K^c$ such that: either (a) $\tau_j=\sigma_i$ or $\sigma_j=\sigma_i$; or (b) $j \in \mc E_i^+\cap\E$. In case (a), 
	$$
		\frac{\partial g_i }{\partial \rho_j}(\rho)
			= \frac{\partial\fin_i}{\partial \rho_j}  (\rho)  
			= \sum_{e \in \mc E_i^-} \frac{\partial f_{e \to i}}{\partial \rho_j} (\rho^e) 
				> 0,
	$$
where the strict inequality follows from the strict version of \eqref{equation:monotone1} that holds true for a strongly monotone routing policy. In case (b), $\frac{\partial}{\partial \rho_j} g_i(\rho) = - \frac{\partial}{\partial \rho_j}  \fout_i(\rho^i) > 0,$ where the strict inequality follows from the strict version of \eqref{equation:monotone2} that holds true for a strongly monotone routing policy. The last claim in Lemma~\ref{corollary:contraction} follows now from the last claim in Lemma~\ref{lemma:l1Contraction}. 
\end{proof}

\subsection{Properties of limit density vectors}

For an initial density $\rho^{\circ}\in\mc R$, let us consider the following subsets of $\mc E$:
	\be
  	\label{BWCF}
    	\ba{ll}
				\ds 	\mc B := \left\{\lim\rho_e(t)=B_e\right\},									&  	\!\!\!\!\mc W:=\left\{\limsup\rho_e(t)<B_e\right\}, 								\\
				[7pt]	\mc Z_{\text{o}}:=\left\{\lim\fout_e(\rho^e(t))=0\right\}\!,	& 	\!\!\!\!\mc Z_{\text{i}}:=\left\{\lim\fin_e(\rho^e(t))=0\right\} ,	\\
				[7pt] \mc C:=\left\{\lim\fout_e(\rho^e(t))=C_e\right\}\!, 					& 	\!\!\!\!\mc Z:= \mc Z_{\text{i}}\cup\mc Z_{\text{o}} ,
        \ea
    \ee	
where the limits are meant as $t\uparrow\kappa(\rho^{\circ})$ and the curly brackets are meant as defining the sets of those links $e$ such that the enclosed condition is satisfied.

Observe that the definitions in \eqref{BWCF} do not assume existence of a limit density. However, if a limit $\rho^*=\lim_{t\uparrow\kappa(\rho^{\circ})}\rho(t)$ exists, then $\mc E=\mc B\cup\mc W$. Also, in general, existence of the limit density $\rho^*$ does not necessarily imply existence of the limit outflow $\lim_{t\uparrow\kappa(\rho^\circ)}\fout_e(\rho^e(t))$ or the limit inflow $\lim_{t\uparrow\kappa(\rho^\circ)}\fin_e(\rho^e(t))$ for every $e\in\E$. 
Finally, observe that $\C\cap\Z_{\text o} = \emptyset$, and that $\B\cap\C\cap\Z_{\text i} = \emptyset$, since $\lim_{t\uparrow\kappa(\rho^{\circ})}\dot\rho_e(t)=-C_e<0$ for all $e\in\C\cap\Z_{\text{i}}$, which is incompatible with $e\in\B$. 

The following lemma characterizes the behavior of $\rho(t)$ starting from some $\rho(0)=\rho^{\circ}\in\R$, as $t$ approaches $\kappa(\rho^\circ)$. 

\begin{lemma}
\label{lemma:propertiesWBCF} 
Let $\mc G=(\mc V,\mc E,C)$ be a network satisfying Assumption \ref{assumption:connectivity}, and $f$ be a monotone distributed routing policy. Let $\rho^\circ \in \R$ be such that the solution $\rho(t)$ of the dynamical flow network \eqref{dynsysexample} with initial condition $\rho(0)=\rho^{\circ}$ admits a limit $\rho^*=\lim_{t\uparrow\kappa(\rho^{\circ})}\rho(t)$. 
Let $\B,\W,\C,\Z\subseteq\E$ be defined as in \eqref{BWCF}. Then,
    \begin{enumerate}
        \item\label{itempropertyAfterExploded} if $e \in \B$, then $e \in \C$, or $e\notin\mc E_{\mc D}^-$ and $\E_e^+ \subseteq \B$;
        \item\label{itemproperyBeforeExploded} if $e \in \B$, then $e \in \Z_{\text{i}}$, or $\E_{\sigma_e}^+ \subseteq\B$;
        \item\label{itempropertyOnlyBAfterNode} 
        if  $e\in\mc W\setminus\E_\D^-$ and $\E_{e}^+ \subseteq \B$, then 
        $e\in\mc Z_{\text{o}}$.
    \end{enumerate}
\end{lemma}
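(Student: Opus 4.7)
The overall approach for all three claims is to exploit the Lipschitz continuity of each local flow map $f^e:\Rb_e\to\F_e$ together with the boundary conditions \eqref{equation:definitionFlowCongestedUpstream} and \eqref{equation:definitionFlowCongestedDownstream}. Since $f^e$ fails to be defined only at the single configuration $\bar\rho^e=(B_j)_{j\in\E_e}$, the recurring task is to verify that the limit configuration $\rho^{e,*}:=\{\rho_k^*:k\in\E_e\}$ (or $\rho^{k,*}$ for some relevant upstream $k\in\E_e^-$) lies in $\Rb_e$ (resp.\ $\Rb_k$); once this is established, one may pass to the limit $t\uparrow\kappa(\rho^\circ)$ in the continuous identity in question.

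For claim (\ref{itempropertyAfterExploded}) I would argue by contrapositive: fix $e\in\B$ and assume that either $e\in\E_\D^-$ or $\E_e^+\not\subseteq\B$; the goal is to show $e\in\C$. If $e\in\E_\D^-$, the outflow depends only on $\rho_e$ and continuity of $\fout_e$ up to $\rho_e=B_e$ combined with \eqref{equation:definitionFlowCongestedUpstream} yields $\fout_e\to C_e$. Otherwise, there exists $j\in\E_e^+$ with $\rho_j^*<B_j$, so $\rho^{e,*}$ differs from $\bar\rho^e$ in the $j$-th coordinate and hence lies in $\Rb_e$; applying \eqref{equation:definitionFlowCongestedUpstream} at $\rho^{e,*}$ (whose $e$-th coordinate equals $B_e$) and invoking continuity again gives $\fout_e(\rho^e(t))\to C_e$, i.e., $e\in\C$. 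Claim (\ref{itempropertyOnlyBAfterNode}) admits a dual, direct proof: for $e\in\W\setminus\E_\D^-$ with $\E_e^+\subseteq\B$ we have $\rho_e^*<B_e$ while $\rho_j^*=B_j$ for every $j\in\E_e^+$, whence $\rho^{e,*}\in\Rb_e$; applying \eqref{equation:definitionFlowCongestedDownstream} termwise then yields $f_{e\to j}(\rho^e(t))\to 0$ for every $j\in\E_e^+$, and summing produces $\fout_e(\rho^e(t))\to 0$.

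The remaining claim (\ref{itemproperyBeforeExploded}) is the most delicate, and I would argue it by contrapositive as well. Assume $e\in\B$ and $\E_{\sigma_e}^+\not\subseteq\B$, and pick some $j\in\E_{\sigma_e}^+$ with $\rho_j^*<B_j$. For every upstream link $k\in\E_e^-$ (which by the extended notation includes the virtual link $e_{\sigma_e}$ when $\sigma_e\in\O$) one has $\E_{\tau_k}^+=\E_{\sigma_e}^+\subseteq\E_k$, so $\rho^{k,*}$ contains the entry $\rho_j^*<B_j$ and is therefore distinct from $\bar\rho^k$, i.e., $\rho^{k,*}\in\Rb_k$. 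Moreover, $e\in\E_{\sigma_e}^+\subseteq\E_k^+$ and $\rho_e^*=B_e$, so \eqref{equation:definitionFlowCongestedDownstream} gives $f_{k\to e}(\rho^{k,*})=0$; continuity then yields $f_{k\to e}(\rho^k(t))\to 0$, and summing over $k\in\E_e^-$ gives $\fin_e(\rho(t))\to 0$, i.e., $e\in\Z_{\text i}$. I do not expect a deeper technical obstacle beyond the careful bookkeeping of which coordinate of each local vector is known to lie strictly below its buffer capacity, which is precisely what certifies membership in $\Rb_e$ or $\Rb_k$ before continuity may be invoked.
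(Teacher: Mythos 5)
Your proposal is correct and follows essentially the same route as the paper's own proof: in each case one checks that the limiting local density vector lies in $\Rb_e$ (resp.\ $\Rb_k$ for upstream links), so that \eqref{equation:definitionFlowCongestedUpstream} or \eqref{equation:definitionFlowCongestedDownstream} together with continuity of the routing maps yields the claimed limits of $\fout_e$ and $\fin_e$. You merely spell out the bookkeeping (which coordinate is strictly below its buffer capacity, and the inclusion of the virtual origin links in $\E_e^-$) that the paper leaves implicit.
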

\begin{proof}
1) First consider the case $e\in\mc E^-_{\mc D}$. Then, \eqref{equation:definitionFlowCongestedUpstream} implies that, if $e\in\mc B$, then $e\in\mc C$. On the other hand, assume that $e\notin\mc E^-_{\mc D}$. Then, if $e\in\mc B$ and $\E^+_e\nsubseteq\B$, necessarily $\{\rho^{*}_e\}_{e\in\mc E_e} \in \Rb_e$, so that property \eqref{equation:definitionFlowCongestedUpstream} implies that $e\in\mc C$. \\
2) Let $e$ be such that $\E_{\sigma_e}^+\not\subseteq\B$. Then,  property \eqref{equation:definitionFlowCongestedDownstream} implies that $\lim_{t\uparrow\kappa(\rho^{\circ})}\fin_e(\rho(t)) =0\,.$  \\
3) If $e\in\mc W\setminus\E^-_{\D}$ and $\E_{e}^+ \subseteq \B$, then property \eqref{equation:definitionFlowCongestedDownstream} implies that $\lim_{t\uparrow\kappa(\rho^{\circ})}\fout_e(\rho(t)) =0$. 
\end{proof}

The following fundamental result states that either $\B = \emptyset$, or there exists a cut on the origin side of which the densities hit the buffer capacities.

\begin{lemma}
\label{lemma:existenceOfCut} 
Let $\G=(\V, \E,C)$ be a network satisfying Assumption \ref{assumption:connectivity}, and $f$ be a monotone distributed routing policy with vector of inflows $\lambda$. Let $\rho^\circ \in \R$ be such that the solution $\rho(t)$ of the dynamical flow network \eqref{dynsysexample} with initial condition $\rho(0)=\rho^{\circ}$ admits a limit $\rho^*=\lim_{t\uparrow\kappa(\rho^{\circ})}\rho(t)$. 
Let $\mc B,\mc W,\mc C,\mc Z\subseteq\mc E$ be defined as in \eqref{BWCF}. Then, either $\E = \W$, or there exists a cut $\S$ with $C_{\S}\le\lambda_{\S}$  such that $\mc E_{\mc S}^+ \subseteq \B$, $\partial_\S^+ \subseteq \C$, $\partial_\S^- \subseteq \Z$, and $\E\setminus(\E^+_{\S}\cup\partial_\S^-)\subseteq\W$.
\end{lemma}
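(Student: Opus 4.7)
The plan is to handle the dichotomy head on. If $\B=\emptyset$, then $\E=\W$ is forced by the definitions in \eqref{BWCF}. Otherwise, I build the cut explicitly as
\[
    \S:=\{v\in\V\setminus\D:\E_v^+\subseteq\B\}
\]
and show $\S\ne\emptyset$ by contradiction: if $\S=\emptyset$, then for any $e\in\B$ one would have $\E_{\sigma_e}^+\not\subseteq\B$, and either $\tau_e\in\D$ or $\E_{\tau_e}^+\not\subseteq\B$; items 2 and 1 of Lemma~\ref{lemma:propertiesWBCF} would then force $e\in\Zi$ and $e\in\C$ simultaneously, contradicting the observation $\B\cap\C\cap\Zi=\emptyset$ recorded just before that lemma.

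Next I verify the four structural inclusions by combining Lemma~\ref{lemma:propertiesWBCF} with the definition of $\S$. The containment $\E_\S^+\subseteq\B$ is immediate. For $e\in\partial_\S^+$, the alternative ``$e\notin\E_\D^-$ and $\E_e^+\subseteq\B$'' of item 1 would put $\tau_e\in\S$ and is therefore excluded, leaving $e\in\C$. For $e\in\partial_\S^-$, $\tau_e\in\S$ gives $\E_e^+=\E_{\tau_e}^+\subseteq\B$; if $e\in\W$, item 3 yields $e\in\Zo$, while if $e\in\B$ the alternative $\E_{\sigma_e}^+\subseteq\B$ in item 2 is ruled out by $\sigma_e\notin\S$, giving $e\in\Zi$; in either case $e\in\Z$. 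For $e\in\E\setminus(\E_\S^+\cup\partial_\S^-)$ both endpoints lie outside $\S$, and the same reasoning used to show $\S\ne\emptyset$ places any hypothetical $e\in\B$ in the empty intersection $\C\cap\Zi$, forcing $e\in\W$.

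To prove $C_\S\le\lambda_\S$, I sum $\dot\rho_e=\fin_e-\fout_e$ over $e\in\E_\S^+$. The flows internal to $\E_\S^+$ cancel, producing the cut identity
\[
    \sum_{e\in\E_\S^+}\dot\rho_e(t)=\lambda_\S+\sum_{j\in\partial_\S^-}\fout_j(\rho^j(t))-\sum_{e\in\partial_\S^+}\fout_e(\rho^e(t)).
\]
Since $\partial_\S^+\subseteq\C$, the last sum tends to $C_\S$ as $t\uparrow\kappa(\rho^\circ)$. For each $j\in\partial_\S^-$, I will show that the integrated outflow $\int_s^t\fout_j(\rho^j(\tau))\,d\tau$ grows sublinearly in $t-s$ as both approach $\kappa(\rho^\circ)$: for $j\in\W$ this is immediate since $j\in\Zo$; for $j\in\B$, item 2 gives $\fin_j\to 0$ so $\int\fin_j=o(t-s)$, and combining with $\int\dot\rho_j=\rho_j(t)-\rho_j(s)\to 0$ and $\fout_j\ge 0$ sandwiches $\int\fout_j$ between $0$ and an $o(t-s)$ quantity. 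Integrating the cut identity on an interval ending at $\kappa(\rho^\circ)$, dividing by its length, and using that the left-hand side is non-negative in the limit (because $\rho_e\le B_e$ with equality in the limit for $e\in\E_\S^+$) then yields $\lambda_\S-C_\S\ge 0$. The main obstacle is precisely this last step: neither $\fout_j$ on $\partial_\S^-$ nor $\dot\rho_e$ on $\E_\S^+$ need converge pointwise, and the argument must cover uniformly both $\kappa<+\infty$ and $\kappa=+\infty$ as well as finite and infinite buffer capacities. Ces\`aro averaging, the Lipschitz regularity of $f^j$, and the boundary property \eqref{equation:definitionFlowCongestedDownstream} are the devices that make the argument go through.
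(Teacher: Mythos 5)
Your construction is the same as the paper's: you define $\S=\{v\in\V\setminus\D:\E_v^+\subseteq\B\}$, prove $\S\neq\emptyset$ via Lemma~\ref{lemma:propertiesWBCF} and the observation $\B\cap\C\cap\Zi=\emptyset$ (the paper argues this constructively link by link, you by contradiction, but the content is identical), and derive the four inclusions exactly as the paper does. The only genuine divergence is the last step, $C_\S\le\lambda_\S$. The paper sums $\dot\rho_e$ over \emph{all} of $\B=\E_\S^+\cup(\partial_\S^-\cap\B)$; with that choice every flow term on the right-hand side converges pointwise (the troublesome $\fout_j$ for $j\in\partial_\S^-\cap\B$ cancels internally, leaving $\fin_j\to0$ by $\partial_\S^-\cap\B\subseteq\Zi$), so $\sum_{e\in\B}\dot\rho_e(t)\to\lambda_\S-C_\S$ and nonnegativity follows immediately from $\rho_e(t)<B_e$ with $\rho_e(t)\to B_e$ on $\B$. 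You instead sum only over $\E_\S^+$ and must therefore control the non-convergent outflows across $\partial_\S^-\cap\B$ by time-averaging; this works, but note one inaccuracy in your sketch: for $j\in\partial_\S^-\cap\B$ with $B_j=+\infty$ (so $\kappa=+\infty$) the claim $\rho_j(t)-\rho_j(s)\to0$ is false. What you actually need is only the one-sided bound $\rho_j(s)-\rho_j(t)\le o(t-s)$, which does hold (indeed $\rho_j(t)\ge\rho_j(s)$ eventually, since $\rho_j(t)\to B_j>\rho_j(s)$), so $\frac1{t-s}\int_s^t\fout_j\,\de\tau\le\frac1{t-s}\int_s^t\fin_j\,\de\tau\to0$ and your sandwich closes; one must also take the limits in the right order (e.g.\ $t\uparrow\kappa$ for fixed $s$, then $s\uparrow\kappa$ when $\kappa<\infty$). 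In short: your route is correct and completable, at the cost of the Ces\`aro machinery you flag; the paper's enlargement of the summation set from $\E_\S^+$ to $\B$ is precisely the device that makes all limits exist pointwise and renders that machinery unnecessary.
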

\begin{proof}
Existence of the limit density $\rho^*$ implies that $\mc E = \mc B \cup \mc W$. Assume that $\mc E \neq \mc W$, and hence $\mc B \neq \emptyset$. Let $\S := \{v\in \V\setminus\mc D: \E_v^+\subseteq \B\}$. To start with, we prove that $\S \neq \emptyset$. To see this, consider a link $e \in \B$. If also $e\in\mc E_{\mc D}^-$, then statement \ref{itempropertyAfterExploded} of Lemma~\ref{lemma:propertiesWBCF} implies that $e \in \mc C$, and hence $e \notin \Z_i$. 
This combined with statement \ref{itemproperyBeforeExploded} of Lemma~\ref{lemma:propertiesWBCF} implies that $\mc E_{\sigma_e}^+ \subseteq \mc B$, and hence $\sigma_e \in \mc S \neq \emptyset$. On the other hand, if $e \in\mc B\setminus \mc E_{\mc D}^-$, then statement~\ref{itempropertyAfterExploded} of Lemma~\ref{lemma:propertiesWBCF} implies that $\mc E_e^+ \subseteq \mc B$ or $e \in \mc C$. In the former case, $\tau_e \in \mc S \neq\emptyset$. In the latter case, $e \in\C\cap\B$ implies again $e \notin \Z_i$, so that, statement \ref{itemproperyBeforeExploded} of Lemma~\ref{lemma:propertiesWBCF} yields $\mc E_{\sigma_e}^+ \subseteq \mc B$, hence $\sigma_e \in \mc S\ne\emptyset$. Hence, $\mc S\ne\emptyset$ and, since $\mc S\cap\mc D=\emptyset$ by construction, $\mc S$ is a cut. Also, by construction, $\mc E^+_{\mc S}\subseteq\mc B$. 

We prove now that $\partial_\S^+ \subseteq \C$. In fact, if $e \in \partial_\S^+$, then $\mc E_e^+ \not \subseteq \mc B$ for otherwise one would have $\tau_e \in \mc S$ so that $e \notin \partial_\S^+$. Hence $e \in \partial_\S^+$ implies $\{\rho^{*}_e\}_{e\in\E_e} \in \Rb_e$, which combined with \eqref{equation:definitionFlowCongestedUpstream} implies $e\in\mc C$.

On the other hand, for every $e \in \partial_\S^-$, one has $\E_{\sigma_e}^+\nsubseteq \B$ (since $\sigma_e\notin\S$) and $\E_e^+ \subseteq \B$ (since $\tau_e\in\mc S$). Therefore, 
 statement \ref{itemproperyBeforeExploded} of Lemma~\ref{lemma:propertiesWBCF} implies that $\partial_\S^-\cap\B \subseteq \Z_{\text{i}}$, 
while statement \ref{itempropertyOnlyBAfterNode} of Lemma \ref{lemma:propertiesWBCF} implies that $\partial_\S^-\cap\W \subseteq \Z_{\text{o}}$.

To show that $\E\setminus(\E^+_{\S}\cup\partial_\S^-)\subseteq\W$, it is sufficient to prove that, for every $e\in\B$ with $\sigma_e\notin\S$, necessarily $\tau_e\in\S$, so that $e\in\partial_\S^-$. Indeed, it follows from statement \ref{itemproperyBeforeExploded} of Lemma \ref{lemma:propertiesWBCF} that $e\in\B$ and $\sigma_e\notin\S$ (i.e., $\E^+_{\sigma_e}\nsubseteq\B$) imply that $e\in\Z_{\text{i}}$, so that $e\notin\C$ and statement \ref{itempropertyAfterExploded} of Lemma \ref{lemma:propertiesWBCF} implies that $\tau_e\in\S$.


{Finally, it follows from $\mc E_{\mc S}^+ \subseteq \B$ and $\E\setminus(\E^+_{\S}\cup\partial_\S^-)\subseteq\W$ that $\mc B=\E_{\S}^+\cup\partial_{\mc S}^-\cap\B$. Then, using $\partial_\S^+ \subseteq \C$, $\partial_\S^-\cap\B \subseteq \Z_{\text{i}}$, and $\partial_\S^-\cap\W \subseteq \Z_{\text{o}}$, one gets that
	\begin{multline*}
		\sum_{e\in\B}\dot{\rho}_e(t)= \lambda_\S + \!\!\!\sum_{e\in\partial_\S^-\cap\mc W}\!\!\!\!\fout_e(t)+ \!\!\!\sum_{e\in\partial_\S^-\cap\mc B}\!\!\!\fin_e(t) - \!\!\!\sum_{e\in\partial_\S^+}\!\fout_e(t)\\
		\stackrel{t\uparrow\kappa(\rho^{\circ})}{\longrightarrow} \lambda_\S - C_\S\,,
	\end{multline*}

Since $\rho_e(t)<B_e$ for  $t\in[0,\kappa(\rho^{\circ}))$ and $\lim_{t\uparrow\kappa(\rho^{\circ})}\rho_e(t)=B_e$ for all $e\in\B$, the above implies that  $\lambda_\S - C_\S\ge0$.}
\end{proof}

\subsection{Proof of Theorem \ref{theorem:mainResult}}\label{sec:proofmaintheo}
The results in the previous subsection assume existence of a limit density, which, in principle, is not guaranteed for every initial condition $\rho(0) = \rho^{\circ}\in\R$. However, for monotone distributed routing policies, existence of a limit density is ensured for the initial condition $\rho(0)=\zerobf$. 
Indeed, for every $\rho^{\circ}\in\mc R$ and $0\le t<\kappa(\rho^{\circ})$, let $\phi^t(\rho^{\circ})=\rho(t)$ be the solution of \eqref{dynsysexample} with initial condition $\rho(0)=\rho^{\circ}$. Then, for monotone distributed routing policies, \eqref{eq:monotonicity} implies that $\phi^{t+s}(\zerobf) = \phi^t(\phi^s(\zerobf)) \geq \phi^t(\zerobf)\,,$ for $0\le t<\kappa(\rho^{\circ})$ and $0\le s<\kappa(\rho^{\circ})-t$, 
 i.e., $\phi^t(\zerobf)$ is component-wise non-decreasing and hence convergent to some limit, to be denoted, with slight abuse of notation, by $\rho^*:=\lim_{t\to\kappa(\zerobf)}\phi^t(\zerobf)\,.$ 

 Let $\mc B$, $\mc W$, $\mc C$, $\Zi$, and $\Zo$ be defined as in \eqref{BWCF} for $\rho^{\circ}=\zerobf$.
First, consider the case $\max_{\U}\left(\lambda_{\mc U}-C_{\mc U}\right)<0$. Then, Lemma \ref{lemma:existenceOfCut} implies that $\E = \W$, as otherwise there would exist a cut $\S$ such that $C_{\S} \leq \lambda_{\S}$. Then, $\rho^*$ is an equilibrium. For an arbitrary initial condition $\rho^{\circ}\in\R$, it cannot be that $\kappa(\rho^{\circ})<\infty$, as then the limit $\lim_{t\uparrow\kappa(\rho^{\circ})}\phi^t(\rho^{\circ})\notin\mc R$ would exist, and Lemma \ref{lemma:existenceOfCut} would imply that $\lambda_{\mc S}\ge C_{\S}$ for some cut $\S$. Therefore, $\kappa(\rho^{\circ})=\infty$, for all $\rho^{\circ}\in\R$. By Lemma \ref{corollary:contraction}, we also have $||\phi^t(\rho^{\circ})-\rho^*||_1\le||\rho^{\circ}-\rho^*||_1$, for all $t\ge0$, so that in particular $\phi^t(\rho^{\circ})$ remains bounded. If the distributed routing policy is strongly monotone, then Lemma \ref{corollary:contraction} allows one to use LaSalle's theorem showing that $\lim_{t\to\infty}\phi^t(\rho^{\circ})=\rho^*$ for any initial condition $\rho^\circ\in\mc R$.

Conversely, if $\rho^*\in \R$, then, for every cut $\U$, mass balance on $\E^+_{\U}$ implies that 
	$$
		0 = \lambda_{\U}-\sum\nolimits_{e\in\partial_{\U}^+}\fout_e(\rho^*) + \sum\nolimits_{e\in\partial_{\U}^-}\fout_e(\rho^*) \ge\lambda_{\U}-C_{\U}\,.
	$$
This proves that, if $\lambda_{\U}>C_{\U}$ for some cut $\U$, then necessarily $\rho^*\notin \R$.   
The same holds if $\max_{\U}\left\{\lambda_{\U}-C_{\U}\right\}=0$ and the routing policy is strongly monotone, for in that case $\sum_{e\in\partial_{\U}^+}\fout_e(\rho^*)<C_{\U}$ if $\rho^*\in\R$. Therefore, $\mc W \neq \mc E$, so that Lemma \ref{lemma:existenceOfCut} implies \eqref{maintheo:claim2} for $\rho^{\circ}=\zerobf$. For arbitrary initial density $\rho^{\circ}\in\R$, consider the following two cases: $\kappa(\rho^{\circ})<+\infty$ and $\kappa(\rho^{\circ})=+\infty$. In the former, $\lim_{t\uparrow\kappa(\rho^{\circ})}\rho(t)$ exists, hence \eqref{maintheo:claim2} is implied by Lemma~\ref{lemma:existenceOfCut}. In the latter, $\kappa(\zerobf) \geq \kappa(\rho^{\circ})=\infty$,  hence \eqref{maintheo:claim2} for $\rho^{\circ}=\zerobf$ also implies \eqref{maintheo:claim2} for arbitrary $\rho^{\circ} \in \R$.
 
\subsection{Proof of Proposition \ref{corollary:FiniteBuffer}}

Observe that, for every cut $\U$, 
	$$
	\sum\nolimits_{e\in\E_\U^+}\dot{\rho}_e 
		= 
			\lambda_\U + \sum\nolimits_{e\in\partial_{\U}^-}\fout_e - 
			\!\sum\nolimits_{e\in\partial_{\U}^+}\!\fout_e \geq \lambda_\U - C_\U\,,  
	$$
so that $\sum_{e\in\E_\U^+}{\rho}_e\ge\sum_{e\in\E_\U^+}{\rho}^\circ_e+t( \lambda_\U - C_\U)$, from which \eqref{kappaUB} follows. On the other hand, \eqref{Sexplodes} is an immediate consequence of claim ii) of Theorem \ref{theorem:mainResult} and the definition of $\kappa(\rho^\circ)$.\\ \vspace{-.6cm}

\subsection{Proof of Proposition \ref{proposition:InfiniteBuffer}}

\newmaterial{
Let $\U^*$ be defined as in \eqref{U*def}, and $\S$ be a cut whose existence is guaranteed by Lemma \ref{lemma:existenceOfCut} for $\rho^\circ=\zerobf$. 
The proof consists of three steps: 1) Lemma~\ref{lem:maximal-cut} characterizes $\U^*$ defined in \eqref{U*def} as the maximal cut such that $\lambda_{\U^*} - C_{\U^*} = \max_{\U}\left\{\lambda_\U - C_{\U}\right\} \geq 0$. 2) Lemma~\ref{lemma:S=U} shows that $\S = \U^*$, where $\S$ is the cut built in Lemma \ref{lemma:existenceOfCut} for $\rho^\circ=\zerobf$. 3) The proof is completed for $\rho^\circ=\zerobf$ and extended to the case of generic initial condition.

\begin{lemma}
\label{lem:maximal-cut}
For a network $\mc G=(\mc V,\mc E,C)$ satisfying Assumption \ref{assumption:connectivity}, let $\mc U^*$ and $\mc M$ be as in \eqref{U*def}. Then, $\mc U^*\in\mc M$.
\end{lemma}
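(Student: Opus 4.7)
The plan is to show that the family $\mc M$ is closed under pairwise unions, then iterate to conclude that the total union $\mc U^*$ lies in $\mc M$. The crucial structural fact is a supermodularity property of the functional $\mc U\mapsto\lambda_{\mc U}-C_{\mc U}$ on subsets of $\mc V\setminus\mc D$, namely
\[
(\lambda_{\mc U_1\cup\mc U_2}-C_{\mc U_1\cup\mc U_2}) + (\lambda_{\mc U_1\cap\mc U_2}-C_{\mc U_1\cap\mc U_2}) \ge (\lambda_{\mc U_1}-C_{\mc U_1}) + (\lambda_{\mc U_2}-C_{\mc U_2})\,,
\]
for all $\mc U_1,\mc U_2\subseteq\mc V\setminus\mc D$, with the convention $\lambda_\emptyset=C_\emptyset=0$.

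First I would observe that the external inflow $\mc U\mapsto\lambda_{\mc U}=\sum_{v\in\mc U}\lambda_v$ is additive over disjoint sets and hence satisfies the modular identity $\lambda_{\mc U_1\cup\mc U_2}+\lambda_{\mc U_1\cap\mc U_2}=\lambda_{\mc U_1}+\lambda_{\mc U_2}$ by inclusion--exclusion. The nontrivial ingredient is submodularity of the cut capacity,
\[
C_{\mc U_1\cup\mc U_2}+C_{\mc U_1\cap\mc U_2}\le C_{\mc U_1}+C_{\mc U_2}\,.
\]
This I would establish by a case analysis on each link $e$: comparing the indicator of $e\in\partial_{\mc U}^+$ across the four choices $\mc U\in\{\mc U_1,\mc U_2,\mc U_1\cup\mc U_2,\mc U_1\cap\mc U_2\}$ as a function of whether $\sigma_e$ and $\tau_e$ lie in $\mc U_1\cap\mc U_2$, $\mc U_1\triangle\mc U_2$, or outside $\mc U_1\cup\mc U_2$. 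In each of the (at most sixteen) cases, the sum of the indicators on the left is at most the sum on the right; summing the capacity contributions over all $e\in\mc E$ gives submodularity. Combining the two identities yields the displayed supermodularity.

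Next I would use supermodularity to close the argument. Let $\mu^*:=\max_{\mc U}\{\lambda_{\mc U}-C_{\mc U}\}$ and note that $\mu^*\ge 0$ by the hypothesis $\max_{\mc U}\{\lambda_{\mc U}-C_{\mc U}\}\ge 0$. For any $\mc U_1,\mc U_2\in\mc M$ the right-hand side equals $2\mu^*$, while each term on the left is bounded above by $\mu^*$: for $\mc U_1\cup\mc U_2$ this is the definition of $\mu^*$ (noting $\mc U_1\cup\mc U_2\subseteq\mc V\setminus\mc D$ is a nonempty cut); for $\mc U_1\cap\mc U_2$ it is the definition when nonempty, and it equals $0\le\mu^*$ when empty, by the stipulated convention. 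The only way both bounds can be saturated to sum to $2\mu^*$ is if $\lambda_{\mc U_1\cup\mc U_2}-C_{\mc U_1\cup\mc U_2}=\mu^*$, i.e., $\mc U_1\cup\mc U_2\in\mc M$. Since $\mc M$ is finite, a trivial induction on $|\mc M|$ then gives $\mc U^*=\bigcup_{\mc U\in\mc M}\mc U\in\mc M$.

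The main obstacle is the case analysis establishing submodularity of $\mc U\mapsto C_{\mc U}$; it is routine but must be done carefully, and the bookkeeping has to accommodate the convention for the possibly empty intersection $\mc U_1\cap\mc U_2$ (which, fortunately, contributes $0$ on both sides and is absorbed by the $\mu^*\ge 0$ assumption). Everything else is algebraic manipulation using the modularity of $\lambda_{\mc U}$ and an easy iteration.
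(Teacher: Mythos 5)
Your argument is correct, and it reaches the paper's key step---closure of $\mc M$ under pairwise unions---by a different decomposition. The paper argues by contradiction: assuming $\lambda_{\mc U_1\cup\mc U_2}-C_{\mc U_1\cup\mc U_2}<\lambda_{\mc U_1}-C_{\mc U_1}$, it applies the bespoke identity \eqref{equation:instrumentalEquality} twice, tracking cross-capacity terms $C^{\mc A}_{\mc H}$, to conclude $\lambda_{\mc U_1\cap\mc U_2}-C_{\mc U_1\cap\mc U_2}>\lambda_{\mc U_2}-C_{\mc U_2}$, contradicting optimality of $\mc U_2$. You instead isolate the underlying structural fact as submodularity of the directed cut capacity (your link-by-link indicator comparison is the standard rearrangement check and goes through even if some $C_e=+\infty$), combine it with modularity of $\lambda_{\mc U}$ to get supermodularity of $\mc U\mapsto\lambda_{\mc U}-C_{\mc U}$, and close with a direct saturation argument plus a finite iteration. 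The mathematical content is equivalent---the paper's contradiction is precisely the contrapositive of your supermodularity step---but your packaging is more modular and treats the case $\mc U_1\cap\mc U_2=\emptyset$ explicitly, via the convention $\lambda_\emptyset=C_\emptyset=0$ together with $\mu^*\ge 0$; that nonnegativity really is needed (for $\mu^*<0$ the conclusion can fail, e.g.\ for two disjoint single-origin cuts whose union is strictly worse than either), and the paper's final \virg{contradicts $\mc U_2\in\mc M$} silently relies on it when the intersection is empty. The one caveat is that $\max_{\mc U}\{\lambda_{\mc U}-C_{\mc U}\}\ge 0$ is a hypothesis of Proposition~\ref{proposition:InfiniteBuffer}, in whose proof the lemma is invoked, rather than of the lemma statement itself; using it is legitimate in context, but you should state it explicitly as an assumption of the lemma.
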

\begin{proof}
We will prove that $\U_1\cup\U_2\in \mc M$ for $\mc U_1,\mc U_2\in\mc M$. For $\A, {\mc H} \subseteq \V$, let $C^{\A}_{\mc H} := \sum_{e:\sigma_e \in \A, \tau_e \in {\mc H}}C_e$. It is easy to see that
\begin{equation}
	\label{equation:instrumentalEquality}
	\lambda_{\A\cup {\mc H}} - C_{\A\cup {\mc H}}
	=\lambda_{\A} + \lambda_{{\mc H}\setminus \A} - C_{\A} + C^{\A}_{\mc H\setminus \A} - C^{{\mc H}\setminus \A}_{\mc V\setminus(\A\cup \mc H)}\,.
\end{equation}
For  $\U_1,\U_2\in\mc M$,  put $\mc I:=\U_1\cap\U_2$, $\mc J:=\U_1\cup\mc U_2$, $\K:=\mc U_2\setminus\mc U_1$. Observe that $\lambda_{\mc J} - C_{\mc J} \leq \lambda_{\U_1} - C_{\U_1}$ since $\U_1\in\mc M$. We now prove that $\lambda_{\mc J} - C_{\mc J} \geq \lambda_{\U_1} - C_{\U_1}$. Assume by contradiction that	$\lambda_{\mc J} - C_{\mc J} < \lambda_{\U_1} - C_{\U_1}= \lambda_{\U_2} - C_{\U_2}.$ Then, \eqref{equation:instrumentalEquality} with $\A = \U_1$ and $\mc H = \U_2$ gives 
$$\lambda_{\U_1} + \lambda_{\mc K} - {\, C_{\U_1}} + C^{\U_1}_{ \mc K} - C^{\mc K}_{\V\setminus\mc J}<\lambda_{\U_1} - C_{\U_1}$$
which yields
	\begin{equation}
		\label{equation:instrumentalContradiction}
		\lambda_{\mc K} + C^{\U_1}_{\mc K} - C^{\mc K}_{ \V\setminus\mc J} < 0\,.
	\end{equation}
Similarly, applying \eqref{equation:instrumentalEquality} with $\A = \mc K$ and $\mc H = \mc I$, noting that $\mc K\cap\mc I=\emptyset$, and using $C_{\mc K} 		= 		C^{\mc K}_{\V\setminus\mc J}		+C^{\mc K}_{\U_1}$ yields
	\begin{equation}
	\label{eq:lambda2-C2}
		\lambda_{\U_2} - C_{\U_2}
		= 
	 \lambda_{\mc K} + \lambda_{\mc I} - C^{\mc K}_{ \V\setminus\mc J}  - C^{\mc K}_{\U_1} + C^{\mc K}_{ \mc I} - C^{\mc I }_{ \V\setminus\U_2}.	
	\end{equation}
Combining \eqref{eq:lambda2-C2} and \eqref{equation:instrumentalContradiction}, some algebraic steps lead to 	
	$$
	\ba{rclcl}\lambda_{\U_2} - C_{\U_2}\!\!
			 &\!\!\!<\!\!\!&	 \lambda_{\mc I} - C^{\U_1}_{ \mc K} - C^{\mc K}_{\U_1} + C^{\mc K}_{ \mc I} - C^{\mc I }_{ \V\setminus\U_2}&&\\	
& \!\!\!=\!\!\!& \lambda_{\mc I} - C_{\mc I}-C^{\U_1\setminus\U_2}_{ \mc K}
		   - C^{\mc K}_{\U_1\setminus\U_2} &\!\!\!\!\!\!<\!\!\!&\!\!\!	 
 \lambda_{\mc I} - C_{\mc I}.\ea$$
Hence, $\lambda_{\mc I} - C_{\mc I} > \lambda_{\U_2} - C_{\U_2}$, which contradicts ${\U_2}\in\mc M$. This proves that $\lambda_{\mc J} - C_{\mc J} =  \lambda_{\U_2} - C_{\U_2} = \lambda_{\U_1} - C_{\U_1}$. 
\end{proof}

\begin{lemma}
\label{lemma:S=U}
Let $\mc G=(\mc V,\mc E,C)$ be a network satisfying Assumption \ref{assumption:connectivity} and $\lambda$ a vector of inflows such that $\max_{\U}\left\{\lambda_{\U}-C_{\U}\right\} \ge0$. Let $f$ be a strongly monotone distributed routing policy. Let $\U^*$ be defined as in \eqref{U*def} and $\mc B,\mc W,\mc C,\Zo\subseteq\mc E$ be defined as in \eqref{BWCF} for $\rho^{\circ}=\zerobf$. If $\kappa(\zerobf)=+\infty$, then 
$\mc E_{\mc U^*}^+ \subseteq \B$, $\partial_{\U^*}^+ \subseteq \C$, $\partial_{\U^*}^- \subseteq \Zo$, and $\E\setminus(\E^+_{\U^*}\cup\partial_{\U^*}^-)\subseteq\W$.
\end{lemma}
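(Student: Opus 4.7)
The plan is to prove that $\mc S = \mc U^*$, where $\mc S$ is the cut supplied by Lemma~\ref{lemma:existenceOfCut} applied with $\rho^\circ = \zerobf$. Three of the four inclusions in the statement will then follow directly from the corresponding conclusions of Lemma~\ref{lemma:existenceOfCut} for $\mc S$, while $\partial_{\mc U^*}^- \subseteq \Zo$ emerges as a strengthening established along the way. The inclusion $\mc S \subseteq \mc U^*$ reduces to showing $\mc S \in \mc M$: monotonicity combined with $\rho(0) = \zerobf$ yields $\dot\rho_e \ge 0$ and hence $\fin_e \ge \fout_e$ pointwise, so $e \in \partial_{\mc S}^- \cap \Zi \Rightarrow \fout_e(t) \to 0$; together with $\partial_{\mc S}^- \cap \Zo$ this gives $\fout_e(t) \to 0$ for every $e \in \partial_{\mc S}^-$. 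Substituting into the integrated mass balance and using $\partial_{\mc S}^+ \subseteq \mc C$ then yields $T^{-1}\sum_{\mc E_{\mc S}^+}\rho_e(T) \to \lambda_{\mc S} - C_{\mc S}$. For any cut $\mc U$, $\sum_{\mc E_{\mc U}^+}\dot\rho_e \ge \lambda_{\mc U} - C_{\mc U}$ gives $T^{-1}\sum_{\mc E_{\mc U}^+}\rho_e(T) \ge \lambda_{\mc U} - C_{\mc U} + o(1)$, while $\rho_e(T)/T \to 0$ for every $e \notin \mc E_{\mc S}^+$ (trivial on $\mc W$, and by Cesaro on $\partial_{\mc S}^- \cap \mc B$ using $\dot\rho_e \to 0$). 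Squeezing yields $\lambda_{\mc U} - C_{\mc U} \le \lambda_{\mc S} - C_{\mc S}$ for all $\mc U$, so $\mc S \in \mc M$.

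For the reverse inclusion $\mc U^* \subseteq \mc S$, note first that by Lemma~\ref{lem:maximal-cut} one has $\mc U^* \in \mc M$. Componentwise convergence of $\rho^e(t)$ (monotone trajectory from $\zerobf$) and continuity of $f^e$ ensure that each $\fout_e(\rho^e(t))$ admits a pointwise limit $\fout_e^* \in [0, C_e]$. Time-averaging the mass balance on $\mc E_{\mc U^*}^+$ together with $T^{-1}\sum_{\mc E_{\mc U^*}^+}\rho_e(T) \to \lambda_{\mc U^*} - C_{\mc U^*}$ (same squeeze as in the previous paragraph, applied to $\mc U = \mc U^*$) produces $\sum_{\partial_{\mc U^*}^+}\fout_e^* - \sum_{\partial_{\mc U^*}^-}\fout_e^* = C_{\mc U^*}$. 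The componentwise bounds $\fout_e^* \in [0, C_e]$ then force $\fout_e^* = C_e$ on $\partial_{\mc U^*}^+$ and $\fout_e^* = 0$ on $\partial_{\mc U^*}^-$, i.e., $\partial_{\mc U^*}^+ \subseteq \mc C$ and $\partial_{\mc U^*}^- \subseteq \Zo$. The strict version of \eqref{equation:monotone1} taken at $k = e$ makes $\fout_e$ strictly increasing in $\rho_e$, so the equality $\fout_e^* = C_e$ cannot hold at any $\rho_e^* < \infty$; hence $\mc C \subseteq \mc B$ and in particular $\partial_{\mc U^*}^+ \subseteq \mc B$.

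Subtracting the limit growth rates on $\mc E_{\mc S}^+$ and $\mc E_{\mc U^*}^+$, both equal to the common value $\lambda_{\mc U^*} - C_{\mc U^*} = \lambda_{\mc S} - C_{\mc S}$, yields $\sum_{e \in \mc E_{\mc U^* \setminus \mc S}^+}\dot\rho_e^* = 0$, and nonnegativity of each summand forces $\dot\rho_e^* = 0$ throughout $\mc E_{\mc U^* \setminus \mc S}^+$. Assume for contradiction that $v_0 \in \mc U^* \setminus \mc S$; the definition of $\mc S$ provides some $e_0 \in \mc E_{v_0}^+ \cap \mc W$. The previous paragraph rules out $\tau_{e_0} \in \mc V \setminus \mc U^*$, so $\tau_{e_0} \in \mc U^*$; moreover Lemma~\ref{lemma:propertiesWBCF}.3 (applied to $e_0 \in \mc W$ with $\mc E_{e_0}^+ = \mc E_{\tau_{e_0}}^+ \subseteq \mc B$ whenever $\tau_{e_0} \in \mc S$) forces $\fout_{e_0}^* = 0$ along any step landing in $\mc S$. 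Iterating along $\mc W$-out-links from $v_0$ yields either a chain terminating at a node of $\mc S$ (a fully blocked arc with $\fin^* = \fout^* = 0$) or a cycle contained in $\mc U^* \setminus \mc S$; combining this with node-level mass balance at each $v \in \mc U^* \setminus \mc S$ (using vanishing internal accumulation and $\fout_e^* = 0$ on links from $\mc U^* \setminus \mc S$ into $\mc S$) and the strict-monotonicity cascade $\fout_e^* = C_e \Rightarrow \rho_e^* = \infty$ yields a contradiction. Thus $\mc U^* \setminus \mc S = \emptyset$, $\mc U^* = \mc S$, and the four inclusions claimed in the lemma follow.

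The main obstacle lies in the final chain-and-cycle argument, which converts the quantitative equality $\dot\rho_e^* = 0$ on $\mc E_{\mc U^* \setminus \mc S}^+$ into the purely topological conclusion $\mc U^* \setminus \mc S = \emptyset$. Carrying this out cleanly requires careful book-keeping along directed paths in the sub-network induced by $\mc U^* \setminus \mc S$, essential use of the strict inequalities from strong monotonicity to rule out ``frozen'' internal flow patterns, and the directed connectivity of $\mc G^a$ guaranteed by Assumption~\ref{assumption:connectivity}.
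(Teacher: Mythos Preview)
Your strategy for $\S\subseteq\U^*$ is a clean and genuinely different route from the paper's: instead of the paper's cut-capacity computation showing $\S\cup\U^*\in\mc M$, you compare time-averaged growth rates to conclude $\S\in\mc M$ directly. This works, and the squeeze (using $\rho_e(T)/T\to0$ on $\W$ and on $\partial_\S^-\cap\B$ via $\dot\rho_e\to0$) is correct. Likewise, your growth-rate argument on $\E_{\U^*}^+$ yielding $\sum_{\partial_{\U^*}^+}\fout_e^*-\sum_{\partial_{\U^*}^-}\fout_e^*=C_{\U^*}$, and hence $\partial_{\U^*}^+\subseteq\C$ and $\partial_{\U^*}^-\subseteq\Zo$, is a nice alternative that delivers two of the four inclusions without first knowing $\U^*=\S$.

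The gap is exactly where you flag it: the chain-and-cycle argument for $\U^*\setminus\S=\emptyset$ is not a proof. You obtain $\dot\rho_e^*=0$ on $\E_{\U^*\setminus\S}^+$ and then gesture at iterating along $\W$-out-links and invoking ``node-level mass balance'' and a ``strict-monotonicity cascade'', but you never exhibit the contradiction. In fact, the aggregate mass balance on $\A:=\U^*\setminus\S$ is \emph{consistent} with $\dot\rho_e^*=0$: it just reproduces $\lambda_{\U^*}-C_{\U^*}=\lambda_\S-C_\S$, which you already know. A single step of the chain---finding $e_0\in\E_{v_0}^+\cap\W$ with $\fout_{e_0}^*=0$---does not by itself force anything to blow up, and following further $\W$-links can cycle indefinitely inside $\A$ without contradiction unless you bring in a quantitative strict inequality somewhere.

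The paper closes this step differently and more directly: it computes $\Upsilon:=\liminf_t\sum_{e\in\E_\A^+\setminus\partial_\S^-}\dot\rho_e(t)$ via the mass balance on $\A$, then uses \emph{strict} monotonicity once, at the aggregate level, to get $\limsup_t\fout_j(t)<C_j$ for $j\in\partial_\A^+\setminus\partial_\S^-$ and $\liminf_t\fout_k(t)>0$ for $k\in\partial_\A^-\setminus\partial_\S^+$ (with Assumption~\ref{assumption:connectivity} handling the degenerate case where both boundary pieces are empty). This yields $\Upsilon>\lambda_{\U^*}-\lambda_\S-C_{\U^*}+C_\S\ge0$, contradicting $\E_\A^+\setminus\partial_\S^-\subseteq\W$. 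That single aggregate strict inequality replaces your entire chain/cycle trace. If you want to salvage your route, the cleanest fix is to observe that $\partial_{\U^*}^+\subseteq\B$ together with Lemma~\ref{lemma:existenceOfCut} forces every link from $\A$ to $\V\setminus\U^*$ to lie in $\B\cap\W=\emptyset$, hence there are none; then the aggregate balance on $\A$ reduces to $0=\lambda_\A+\sum_{\S\to\A}C_e$, so $\lambda_\A=0$ and there are no links from $\S$ to $\A$---at which point Assumption~\ref{assumption:connectivity} and one more application of strict monotonicity (on a link in $\partial_{\U^*}^-$ landing in $\A$) finish the job. But this still needs to be written out; as it stands, the final paragraph is a sketch, not a proof.
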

\begin{proof}
Let $\rho(t)$ be the solution of \eqref{dynsysexample} with initial condition $\rho(0)=\zerobf$ and $\S:=\{v\in\mc V\setminus \D:\,\E^+_v\subseteq\B\}$. Observe that, as argued in Sect.~\ref{sec:proofmaintheo}, $\dot\rho_e=\fin_e(\rho)-\fout_e(\rho^e)\ge0$ for all $e$, so that in particular $\Z_i\subseteq\Z_o$. On the other hand, Barbalat's lemma implies that $\dot\rho_e\to0$ for $e\in\mc W$, so that $\mc W\cap\Zo\subseteq\Z_i$. Then, it follows from Lemma \ref{lemma:existenceOfCut} that $\partial_\S^+\subseteq\C$, $\partial_\S^-\subseteq\Zo\cap\Zi$, and $\E\setminus(\E^+_\S\cup\partial_\S^-)\subseteq\W$.

It remains to show that $\S = \U^*$. We start by proving that $\S\subseteq\U^*$. Define $\mc H:=\S\setminus\U^*$, $\mc I:=\partial_{\mc H}^-\cap\E^+_{\S}$, and $\mc J:=\partial_{\mc H}^+\cap\partial_{\S}^+$. Then, 
	\begin{align*}
		0	&\le \sum_{e\in\E^+_{\mc H}}\dot{\rho}_e(t) \\
			&\le  \lambda_\mc H + \sum_{e\in\partial_\S^-}\fout_e(t) 	
			+\sum_{i\in\mc I}\fout_i(t)
  - \sum_{j\in\mc J}\fout_j(t)\,.
 	\end{align*}
Passing to the limit of large $t$, $\partial_\S^- \subseteq\Zo$ and $\partial_\S^+\subseteq\C$ imply 
$$
		0\le\lambda_\mc H + \sum_{i\in\mc I}\fmax_i -\sum_{j\in\mc J}\fmax_j\,.
$$

Let now $\hat{\U} := \S\cup\U^* \supseteq \U^*$ and notice that $\mc K:=\partial_{\mc H}^+ \setminus\partial_{\U^*}^-\subseteq \mc J$ and $\mc I\subseteq\partial_\mc H^- \cap\partial_{\U^*}^+=:\mc L$. Then,
$$
	C_{\hat{\U}}=C_{{\U}^*}+c_{\mc K}-c_{\mc L}
	  \le C_{{\U}^*}+c_{\mc J}-c_{\mc I}
 \le C_{{\U}^*}+\lambda_\mc H\,,
$$
where $c_{\mc X}:=\sum_{x\in\mc X}\fmax_c$ for $\mc X=\mc I,\mc J,\mc K,\mc L$.
This implies that
	$$
		\lambda_{\hat \U}-C_{\hat \U}=\lambda_{\U^*}+\lambda_{\mc H}-C_{\hat \U}\ge \lambda_{\U^*}-C_{{\U}^*}\,,
	$$
so that $\hat \U\in\mc M$, and then $\hat \U=\U^*$. Therefore, $\S\subseteq \U^*$. 
We now prove that $\U^*\subseteq\S$. 
Assume by contradiction that $\A:=\U^*\setminus\S\neq\emptyset$.	Let  
	$$
			\Upsilon:=\lambda_{\A} +\!\!\sum_{e\in\partial_\A^- \cap\partial_\S^+}\!\!\fmax_e 
			  \, + \liminf_{t}\!\!\sum_{k\in\partial_\A^-\setminus\partial_{\mc S}^+}\!\!\fout_k(t)-\!\!\sum_{j\in\partial_\A^+ \setminus\partial_\S^-}\!\!\fout_j(t).
	$$
Then, the inclusions $\partial_\S^- \subseteq\Zo\cap\Z_i$ and $\partial_{\S}^+\subseteq\C$ imply
	$$
		\ba{lcl}
			\ds\liminf _t\!\!\! \sum_{e\in\E_{\A}^+\setminus\partial_\S^-}\dot\rho_e(t)\!\!\!\!\!
				& =
				\ds\liminf_t\!\!\!\sum_{e\in\E_{\A}^+\setminus\partial_\S^-}\!\!\!\! \left( \fin_e(t)-\fout_e(t) \right)&\\[15pt]\ds
				 &=
				\ds\liminf_t\sum_{e\in\E_{\A}^+} \left( \fin_e(t)-\fout_e(t) \right) &
				 \!\!\!= \Upsilon.
		\ea
	$$	
Observe that strict monotonicity implies  that 
	\be
		\label{strictmono}
			\limsup_t \fout_j(t)<C_j\,,\qquad\liminf_t\fout_{k}(t) > 0\,,
	\ee
for all $j\in\mc\E^+_\A\setminus\partial_\S^-$ and $k\in\partial_\A^-$. If $\partial_\A^-\setminus\partial_{\mc S}^+ = \partial_\A^+ \setminus\partial_\S^- = \emptyset$, then Assumption \ref{assumption:connectivity} implies that $\lambda_\A > 0$ or $\partial_\A^- \cap\partial_\S^+ \neq \emptyset$, therefore $\Upsilon =\lambda_{\A} +\!\!\sum_{e\in\partial_\A^- \cap\partial_\S^+}\!\!\fmax_e >0$. 

On the other hand if $\partial_\A^-\setminus\partial_{\mc S}^+ \neq \emptyset$ or $\partial_\A^+ \setminus\partial_\S^- \neq \emptyset$, then \eqref{strictmono} and $\S\subseteq\U^*$ imply
\begin{align*}
	&\Upsilon
			>\lambda_{\A}+\!\!\!\!\!\sum_{e\in\partial_\A^- \cap\partial_\S^+}\!\!\!\!\!\fmax_e - \!\!\!\!\!\sum_{e\in\partial_\A^+ \setminus\partial_\S^-}\!\!\!\!\!C_e
			=
			\lambda_{\U^*}-\lambda_{\S}-C_{\U^*}+C_{\S} \ge0\,,
\end{align*}	
the last inequality holding since $\U^*\in\mc M$ by Lemma \ref{lem:maximal-cut}. In both cases, $\liminf_t \sum_{e\in\E_{\A}^+\setminus\partial_\S^-}\dot\rho_e(t) = \Upsilon > 0$, which contradicts $\E_{\A}^+\setminus\partial_\S^- \subseteq\W$. 
Then, necessarily $\A=\emptyset$, so that $\U^*\subseteq\S$.
\end{proof}\medskip

We can now conclude the proof of Proposition \ref{proposition:InfiniteBuffer}. Infinite buffers and limited growth rate imply $\kappa(\rho^\circ) = \infty$ for every $\rho^\circ \in\R$. For $\rho^\circ=\zerobf$, Lemmas \ref{lemma:existenceOfCut} and \ref{lemma:S=U} imply \eqref{limE*} and \eqref{limE*S}. For arbitrary $\rho^{\circ}\in\R$, the extension of \eqref{limE*} follows from Lemma \ref{corollary:contraction}, hence we only need to prove \eqref{limE*S}. 
Towards this goal, first note that $\rho^\circ \geq \zerobf$ implies, by monotonicity, that
	\be
		\label{mono1}\liminf_{t\to\infty}\rho_e(t)\ge\rho_e^*\,,\qquad \forall e\in\hat\E\,.
		\ee 
Consider a new network $\hat\G=(\hat\V,\hat\E,\hat C)$ with $\hat\V:=\mc V\setminus\S$ and $\hat C_e=C_e$ for $e\in\hat\E$, and with inflows $\hat\lambda_{\hat v}:= \lambda_{\hat{v}} + \sum_{e\in\mc E^-_{\hat v}\cap\partial_\S^+}C_e$ for $\hat v\in\hat\V$, and buffer capacities $\hat B_e=B_e$ for $e\in\hat\E$. Let $\hat f$ be a distributed routing function for $\hat\G$ such that $\hat f_{e\to j}(\hat\rho^e)=f_{e\to j}(\rho^e)$ where $\rho^e\in\R^\bullet_e$ is such that $\rho_j=\hat\rho_j$ for all $j\in\E_e\cap\hat\E$, and $\rho_j=B_j$ for all $j\in\E_e\cap\partial_{\S}^-$. This defines a dynamics on the reduced network $\hat\G$.

Observe now that clearly $\hat\G$ satisfies Assumption~\ref{assumption:connectivity}. In addition, $\S=\U^*$ implies $\hat{\lambda}_{\hat\U} < C_{\hat\U}$ for every cut $\hat \U$ in $\hat \G$, where $\hat\lambda_{\hat\U} = \sum_{\hat v\in\hat\U}\hat\lambda_{\hat v}$. Then, applying part i) of Theorem~\ref{theorem:mainResult} to the dynamical flow network associated to $\hat \G$ and $\{\hat f_e\}_{e\in\hat\E}$ shows existence of a globally attractive equilibrium, $\hat{\rho}^*=\lim_t\hat\rho(t)$. Notice that the solution to this system coincides with the solution of the original one once we fix to be equal to $B_e = +\infty$ the density $\rho_e$ for every $e\in\E^+_\S\cup\partial_{\U^*}^-$. In particular, asymptotically, the limits must be the same, i.e., $\hat\rho^*=\rho^*_{\hat\E}$.

Finally, the new network is a monotone controlled system \cite{AngeliTAC03}, once we interpret the densities on $\E^+_\S\cup\partial_\S^-$ as inputs. Since $\rho_e(t) < B_e = +\infty$ for all $e \in \E^+_\S\cup\partial_\S^-$ and $t \geq 0$, one gets that
	\begin{equation}
	\label{equation:notBlowingAbove}
		\limsup_{t\to\infty}\rho_e(t) \leq \lim_{t\to\infty}\hat \rho_e(t) = \rho_e^*\,,\qquad \forall e\in\hat\E\,.
	\end{equation}
Combining \eqref{mono1} and \eqref{equation:notBlowingAbove} gives \eqref{limE*S} for arbitrary $\rho^\circ\in\R$.


}

\section{Conclusion}
\label{section:conclusions}
We study dynamical flow networks under distributed monotone routing policies. An $l_1$-contraction argument for monotone systems is instrumental to prove throughput optimality of the proposed policies both when the min-cut capacity constraints are satisfied and in overload. These tools can be fruitfully employed for analysis of transportation networks \cite{LovisariCDC14}.



Future research includes and is not limited to design of application-oriented control policies and optimization with respect to secondary objectives, such as steady-state delay, without compromising throughput optimality. We also plan to extend our framework to the multi-commodity case under partial state feedback, modeling urban traffic networks where observations are the aggregates of flows of all commodities.




\bibliographystyle{IEEEtran}
\bibliography{bibliography}             

\end{document}